\newtheorem{theorem}{Theorem}
\newtheorem{lemma}{Lemma}
\newenvironment{proof}[1][Proof]{\noindent\text{#1}\,.\,\,}{\hfill  $\square$}
\newcounter{lastnote}
\title{On the Metric Dimension of Generalized Petersen Graphs $P(n,3)$}
\author{Rui Gao$^{1}$, Yingqing Xiao$^{2}$, and Zhanqi Zhang$^{3\ast}$
	\\
	\small{$^{1}$School of Statistics and Mathematics,}\\
	\small{Shandong University of Finance and Economics,}\\
	\small{Jinan, 250014, China}\\
	\small{$^{2}$School of Mathematics, Hunan University,}\\
	\small{Changsha, 410082, China}\\
	\small{$^{3}$School of Mathematics and Statistics,}\\
	\small{Hunan University of Science and Technology,}\\
	\small{Xiangtan, 411201, China}\\
	\small{$^\ast$Corresponding author: Zhanqi Zhang(e-mail: rateriver@sina.com).}
}
\date{}
\begin{document}
\baselineskip24pt
\maketitle

\begin{abstract}
The metric dimension of a graph $G$ is defined as the minimum number of vertices in a subset $S\subset V(G)$ such that all other vertices are uniquely determined by their distances to the vertices in $S$, and is denoted by $\dim(G)$. 
In this paper, we study the metric dimension of generalized Petersen graphs $P(n,3)$. The notions of good and bad vertices, which are introduced in
Imran et al. (2014, Ars. Combinatoria 117, 113-130), are instrumental in determining the lower bound of the metric dimension for certain types of graphs.
We propose an approach, based on these notions, to determine the lower bound of $\dim(P(n,3))$. Moreover, we shall prove that $\dim(P(n,3))=4$,
where $n\equiv2,3,4,5 \,\,(\text{mod}\,\, 6)$ and is sufficiently large.
\end{abstract}

\textbf{Key Words:} metric dimension; resolving set; generalized Petersen graph; distance.

\textbf{2010 Mathematics subject classification:} Primary 05C35; Secondary  05C12.

\section{Introduction}
Let $G=(V(G),E(G))$ be a simple undirected connected graph. The distance between two vertices $u,v$ is the length of a shortest path 
between them, and is denoted by $d(u,v)$. 
A vertex $w$ is said to \textit{resolve} or \textit{distinguish} $u$ and $v$ if $d(w,u)\neq d(w,v)$.
For an ordered set $W=\{w_1,\cdots,w_k\}$ of $k$ distinct vertices and a vertex $z$ in $G$,
we refer to the $k$-tuple $r(z|W)=(d(z,w_1),d(z,w_2),\cdots,d(z,w_k))$ as the \textit{metric representation} of $z$ with respect to $W$. 
The set $W$ is said to \textit{resolve} or \textit{distinguish} a pair of vertices $u,v$ if $r(u|W)\neq r(v|W)$.
Furthermore, $W$ is called a \textit{resolving set} of $G$ if $r(u|W)=r(v|W)$ implies that $u=v$ for all $u,v\in V(G)$.
A resolving set containing a minimum number of vertices is called a \textit{metric basis} of $G$, and
its cardinality the \textit{metric dimension} of $G$, denoted by $\mathrm{dim}(G)$.

Inspired by the problem of pinpointing the exact location of an intruder within a network, Slater introduced the notion of metric dimension in \cite{Slater}. Harary and Melter independently proposed the concept of metric dimension in \cite{Harary}.
It was proved that the metric dimension is a NP-hard graph invariant \cite{Khuller}. Mathematicians have undertaken extensive studies on the metric dimension of numerous graphs exhibiting unique structural properties, for example, the wheel \cite{Buczkowski}, the fan \cite{Hernando}, the Jahangir graph \cite{Tomescu}, the unicyclic graph \cite{Sedlar}, and the circulant graph \cite{Vetrik1,Gao,Vetrik2,Tapendra}.
Random graph models, which define the probability distributions over graph structures and often involve some generative mechanism, 
are more suitable for simulating real-world networks than deterministic graphs. Recently, the metric dimension of Erd\H{o}s-R\'{e}nyi random graphs \cite{Bollobas} and random trees and forests \cite{Mitsche} have been characterized.
For more detailed information on the history, applications, and future research directions of metric dimension, we refer to \cite{Tillquist}.

The concept of generalized Petersen graphs $P(n,m)$ was initially introduced in \cite{Watkins}, here we require that $n\geq3$ and $1\leq m\leq\lfloor\frac{n-1}{2}\rfloor$.
We introduce some results concerning the metric dimension of generalized Petersen graphs. In \cite{Javaid1} it was proved that $\dim(P(n,2))=3$ for $n\geq5$. In \cite{Imran1}, Imran et al. studied the metric dimension of generalized Petersen graphs $P(n,3)$, and obtained that for sufficiently large $n$,

\begin{align*}
	&\dim(P(n,3))
	\begin{cases}
		=4,\quad   & \text{if}\,\, n\equiv0 \,\,(\text{mod}\,\, 6), \\
		=3,\quad   & \text{if}\,\, n\equiv1 \,\,(\text{mod}\,\, 6),\\
		\leq5,\quad   & \text{if}\,\, n\equiv2 \,\,(\text{mod}\,\, 6), \\
		\leq4,\quad   & \text{if}\,\, n\equiv3,4,5 \,\,(\text{mod}\,\, 6). \\
	\end{cases}
\end{align*}
In \cite{Naz}, Naz et al. studied the metric dimension of generalized Petersen graphs $P(n,4)$, and obtained that for sufficiently large $n$,

\begin{align*}
	&\dim(P(n,4))
	\begin{cases}
		=3,\quad   & \text{if}\,\, n\equiv0 \,\,(\text{mod}\,\, 4), \\
		\leq4,\quad   & \text{if}\,\, n\equiv1,2 \,\,(\text{mod}\,\, 4),\\
		\leq4,\quad   & \text{if}\,\, n=4k+3\,\,\text{and}\,\,k\,\,\text{is odd}, \\
		=4,\quad   & \text{if}\,\, n=4k+3\,\,\text{and}\,\,k\,\,\text{is even}. \\
	\end{cases}
\end{align*}
In \cite{Shao}, Shao et al. calculated the values of $\dim(P(n,3))$ and $\dim(P(n,4))$ when $n$ is relatively small, and also explored the metric dimensions of $P(2n,n)$ and $P(3n,n)$.
In \cite{Javaid2}, Javaid et al. considered the generalized Petersen graphs $P(2n+1,n)$, and obtained that $\dim(P(2n+1,n))=3$ for $n\geq2$.
It was proved in \cite{Ahmad2} that the generalized Petersen graphs $P(2n,n-1)$ have metric dimension equal to $3$ for odd $n\geq3$, and equal to $4$ for even $n\geq4$. In \cite{Imran2}, Imran et al. revisited the generalized Petersen graphs $P(2n,n)$ and deduced that 
$P(2n,n)$ have metric dimension $3$ when $n$ is even and $4$ otherwise.

This paper is devoted to the study of the metric dimension of generalized Petersen graphs $P(n,3)$. In this paper, we always assume that $n$ is sufficiently large, for example, $n\geq36$.
$P(n,3)$ is an important family of cubic graphs having vertex-set

$$
V=\{u_1,u_2,\ldots,u_n,v_1,v_2,\ldots,v_n\},
$$
and edge-set 

$$
E=\{u_iu_{i+1},u_iv_i,v_iv_{i+3}:1\leq i\leq n\}.
$$
Index $i$ is called a \textit{subscript} of $u_i$ and $v_i$, and is taken modulo $n$.
The subgraph that comprises vertex-set $\{u_1,u_2,\ldots,u_n\}$ and 
edge-set $\{u_iu_{i+1}:1\leq i\leq n\}$ is referred to as the \textit{outer cycle}. If $n\equiv 0\,(\text{mod}\,3)$, then $\{v_1,v_2,\ldots,v_n\}$ induces $3$ cycles of length $\frac{n}{3}$, otherwise it induces a cycle of length $n$ with $v_iv_{i+3},1\leq i\leq n$ as edges. We call the cycles induced by $\{v_1,v_2,\ldots,v_n\}$ the \textit{inner cycles}. $u_i$ is called the \textit{corresponding vertex} of $v_i$ on the outer cycle, and vice versa. 

\section{Lower bounds for the metric dimension}
For the sake of brevity we define 

$$
\forall\, L\in \mathbb{N}:\quad f(L):=L-2\left\lfloor\frac{L}{3}\right\rfloor.
$$
Although $f$ is not a monotonic function, we check that $f(L_1)\geq f(L_2)$ whenever $L_1\geq L_2+2$. There is a convenient method to compute the value of $f$; that is, suppose that $L=3m+i$, where $i=0,1,2$, then $f(L)=m+i$.

The \textit{clockwise distance} from $u_i$ to $u_j$, denoted by $d^{*}(u_i,u_j)$, is defined as the number of edges that must be crossed in the outer cycle to move from $u_i$ to $u_j$ in a clockwise direction. For example, $d^{*}(u_1,u_n)=n-1$ and $d^{*}(u_n,u_1)=1$. This definition can be extended to any two vertices with different subscripts $i\neq j$, i.e., 
$d^{*}(u_i,v_j)=d^{*}(v_i,u_j)=d^{*}(v_i,v_j)=d^{*}(u_i,u_j)$.
Let $A$ be the set of all vertices on the outer cycle whose clockwise distance from $u_1$ is congruent to $1$ modulo $3$, $B$ the set of all vertices on the outer cycle whose clockwise distance from $u_1$ is congruent to $2$ modulo $3$, and let $C$ be the set of all vertices on the outer cycle, except for $u_1$, whose clockwise distance from $u_1$ is congruent to $0$ modulo $3$. For example, when $n=6k+3$, we have that $A=\{u_2,u_5,u_8,\ldots,u_{6k-1},u_{6k+2}\}$, $B=\{u_3,u_6,u_9,\ldots,u_{6k},u_{6k+3}\}$ and $C=\{u_4,u_7,u_{10},\ldots,u_{6k-2},u_{6k+1}\}$.

Now we introduce the notion of good and bad vertices, which is used in \cite{Imran1} to obtain the lower bounds for $\dim(P(n,3))$.
Let $w$ be a vertex of $P(n,3)$ and $W$ a subset of $V(P(n,3))$.
A vertex $u_i$ on the outer cycle is called a \textit{good vertex} for $w$ if $u_i$ and $u_{i+2}$ have equal distance to $w$; otherwise $u_i$ is called a \textit{bad vertex} for $w$.
If $u_i$ is a bad vertex for $w$, we also say that $w$ can \textit{recognize} $u_i$, and write $w\xrightarrow{\text{Reg}}u_i$ to denote this situation.
$W$ is said to \textit{recognize} $u_i$ if at least a member of $W$ can recognize $u_i$. It is worth noticing that, each resolving set of $P(n,3)$ can recognize all vertices on the outer cycle.
Let $A(u_i)$ denote the subset of $A$ consisting of all the vertices capable of recognizing $u_i$. $B(u_i)$ and $C(u_i)$ can be similarly defined.
The following lemma provides a means of finding vertices, for which a given vertex is good.

\begin{lemma}\label{xxx}
Let $1\leq i\leq n-1$. If $u_{j-i}$ is a good vertex for $u_1$, then $u_j$ is a good vertex for $u_{1+i}$.
\end{lemma}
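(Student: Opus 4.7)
The plan is to exploit the rotational symmetry of $P(n,3)$. For any fixed shift $i$, define the map $\sigma\colon V(P(n,3))\to V(P(n,3))$ by $\sigma(u_k)=u_{k+i}$ and $\sigma(v_k)=v_{k+i}$, where subscripts are taken modulo $n$. I claim $\sigma$ is a graph automorphism: it sends outer-cycle edges $u_ku_{k+1}$ to $u_{k+i}u_{k+1+i}$, spoke edges $u_kv_k$ to $u_{k+i}v_{k+i}$, and inner-cycle edges $v_kv_{k+3}$ to $v_{k+i}v_{k+3+i}$, each of which is again an edge of $P(n,3)$. Hence $\sigma$ is a bijection preserving the edge set, and consequently preserves graph distance: $d(x,y)=d(\sigma(x),\sigma(y))$ for all $x,y\in V(P(n,3))$.

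With this automorphism in hand, the lemma is essentially a one-line application. The hypothesis that $u_{j-i}$ is a good vertex for $u_1$ unfolds to the equation
$$
d(u_{j-i},u_1)\;=\;d(u_{j-i+2},u_1).
$$
Applying $\sigma$ to each argument on both sides, and using $\sigma(u_{j-i})=u_j$, $\sigma(u_{j-i+2})=u_{j+2}$, $\sigma(u_1)=u_{1+i}$, I obtain
$$
d(u_j,u_{1+i})\;=\;d(u_{j+2},u_{1+i}),
$$
which is precisely the statement that $u_j$ is a good vertex for $u_{1+i}$.

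There is no substantive obstacle: the entire content of the lemma is the fact that $\mathbb{Z}/n\mathbb{Z}$ acts on $P(n,3)$ by subscript rotation, and that this action is by graph automorphisms. The only things worth double-checking explicitly in the write-up are that the action is well-defined modulo $n$ (so the indices $j-i$, $j-i+2$, $j$, $j+2$ represent legitimate vertices) and that the three edge types listed in the definition of $P(n,3)$ are each preserved by $\sigma$; both are immediate from the form of the edge set given in the introduction.
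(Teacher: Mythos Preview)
Your proof is correct and follows essentially the same approach as the paper: both invoke the rotational symmetry of $P(n,3)$ to transport the equality $d(u_{j-i},u_1)=d(u_{j-i+2},u_1)$ to $d(u_j,u_{1+i})=d(u_{j+2},u_{1+i})$. You are simply more explicit in writing out the automorphism $\sigma$ and checking that it preserves each edge type, whereas the paper appeals to the symmetry in a single phrase.
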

\begin{proof}
Due to the rotational symmetry of $P(n,3)$ we have 

$$
d(u_{j-i},u_1)=d(u_j,u_{1+i})\quad\text{and}\quad d(u_{j-i+2},u_1)=d(u_{j+2},u_{1+i}),
$$
and, since $u_{j-i}$ and $u_{j-i+2}$ have equal distance to $u_1$, it follows that $u_j$ and $u_{j+2}$ have equal distance to $u_{1+i}$.
\end{proof}

\subsection{Case when $n=6k+3$}\label{subsection3}
In this subsection, we always assume $n=6k+3$ and $k\geq6$. For $n\equiv 3\,(\text{mod}\,6)$, the distance between two vertices in $P(n,3)$ is $d(u_i,v_j)=f(L)+1$, and 
\begin{align*}
	&d(u_i,u_j)=
	\begin{cases}
		L,\quad   & \text{if}\,\, L\leq2, \\
		f(L)+2,\quad   & \text{if}\,\, L\geq3,
	\end{cases}
	 \\
	&d(v_i,v_j)=
	\begin{cases}
		f(L),\quad   & \text{if}\,\, L\equiv 0\,(\text{mod}\,3), \\
		f(L)+2,\quad   & \text{if}\,\, L\equiv 1,2\,(\text{mod}\,3),
	\end{cases}
\end{align*}
where $L=|i-j|\wedge (n-|i-j|)$.
According to the distance formula, we can find out all good vertices for $u_1$:
$$
u_5,u_6,\ldots,u_{3i-1},u_{3i},\ldots,u_{3k-1},u_{3k},u_{3k+3},u_{3k+4},\ldots,u_{3k+3j},u_{3k+3j+1},\ldots,u_{6k-3},u_{6k-2},u_{6k+3}.
$$
Alternatively, $u_1$ can recognize all vertices on the outer cycle, except for the above. 
Noticing that the set of good vertices for $v_1$ is deduced from that for $u_1$ by adding $4$ new vertices $u_2,u_3,u_{6k},u_{6k+1}$. 
It follows from the rotational symmetry that $u_i$ can recognize more vertices on the outer cycle than $v_i$ for each $i$.
There is another point worth noticing: suppose that a set of vertices $W$ cannot recognize a vertex, say $u_j$, on the outer cycle. If we replace one or several members of $W$, which are on the outer cycle, by the corresponding vertices on the inner cycle, then the newly obtained set is unable to recognize $u_j$ either. 

Now we introduce a method for identifying all the members in $A$ that can recognize a given vertex. 
Choose a vertex on the outer cycle, say $u_{3i-1}$, $2\leq i\leq k$. Subtract $1,4,7,\ldots,6k-2,6k+1$ from the subscript of this vertex in sequence and we obtain
$$
u_{(3i-1)-1},u_{(3i-1)-4},u_{(3i-1)-7},\ldots,u_{(3i-1)-(6k+1)}.
$$
Among them, one can verify that $u_{(3i-1)-(3i+4)},u_{(3i-1)-(3i+7)},\ldots,u_{(3i-1)-(3k+3i-2)}$ are good for $u_1$. 
It follows from Lemma \ref{xxx} that $u_{3i-1}$ is good for $u_{1+(3i+4)},u_{1+(3i+7)},\ldots,u_{1+(3k+3i-2)}$.
Removing these vertices from set $A$, the newly obtained set consists of all the vertices that can recognize $u_{3i-1}$.
Note that the methods for finding sets $B(u_i)$ or $C(u_i)$
are similar to that of sets $A(u_i)$, with the first step being the only difference. To find $B(u_i)$, we need to subtract $2,5,8,\ldots,6k-1,6k+2$ from the subscript of $u_i$; to find $C(u_i)$, we need to subtract $3,6,9,\ldots,6k-3,6k$ from the subscript of $u_i$.
Using this method, we obtain Table \ref{table1}.
\begin{table}[h]
	\centering
	\begin{tabular}{|c|c|c|}
		\hline
		\textbf{Set} & \textbf{Vertices} & \textbf{Range}\\\hline
		\hline
		$A(u_{3i-1})$ & $u_2,u_5,\ldots,u_{3i+2},u_{3k+3i+2},\ldots,u_{6k+2}$&$2\leq i\leq k$ \\
		\hline
		$A(u_{3i})$ & $u_{3i-1},u_{3i+2},\ldots,u_{3k+3i+2}$&$2\leq i\leq k$ \\
		\hline
		$A(u_{3k+3i})$ & $u_2,\ldots,u_{3i-1},u_{3k+3i-1},u_{3k+3i+2},\ldots,u_{6k+2}$ &$1\leq i\leq k-1$\\
		\hline
		$A(u_{3k+3i+1})$ & $u_{3k+3i-1},u_{3k+3i+5}$&$1\leq i\leq k-1$ \\
		\hline
		$B(u_{3i-1})$ & $u_{3i-3},u_{3i+3}$&$2\leq i\leq k$ \\
		\hline
		$B(u_{3i})$ & $u_3,u_6,\ldots,u_{3i+3},u_{3k+3i+3},\ldots,u_{6k+3}$&$2\leq i\leq k$ \\
		\hline
		$B(u_{3k+3i})$ & $u_{3i},u_{3i+3},\ldots,u_{3k+3i+3}$&$1\leq i\leq k-1$ \\
		\hline
		$B(u_{3k+3i+1})$ & $u_3,\ldots,u_{3i},u_{3k+3i},u_{3k+3i+3},\ldots,u_{6k+3}$&$1\leq i\leq k-1$ \\
		\hline
		$C(u_{3i-1})$ & $u_{3i-2},u_{3i+1},\ldots,u_{3k+3i+1}$&$2\leq i\leq k$ \\
		\hline
		$C(u_{3i})$ & $u_{3i-2},u_{3i+4}$&$2\leq i\leq k$ \\
		\hline
		$C(u_{3k+3i})$ & $u_{3k+3i-2},u_{3k+3i+4}$&$1\leq i\leq k-1$ \\
		\hline
		$C(u_{3k+3i+1})$ & $u_{3i+1},u_{3i+4},\ldots,u_{3k+3i+4}$&$1\leq i\leq k-1$ \\
		\hline
	\end{tabular}
	\caption{The result when $n=6k+3$.}\label{table1}
\end{table}
Now we turn to the result:

\begin{theorem}\label{3}
If $n=6k+3$ and $k\geq6$, then $\dim(P(n,3))\geq4$.
\end{theorem}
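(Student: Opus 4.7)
The plan is to derive a contradiction from the assumption that $P(n,3)$ admits a resolving set $W$ with $|W|=3$. By the observation preceding this theorem---that replacing an outer-cycle vertex in $W$ by its inner-cycle counterpart cannot enlarge the set of recognized outer vertices---and by the rotational symmetry of $P(n,3)$, we may reduce to the case $W=\{u_1,w_1,w_2\}$, where $w_1,w_2$ are two distinct outer-cycle vertices different from $u_1$.

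Recall that any resolving set must recognize every outer-cycle vertex. Since $u_1$ recognizes precisely those outer vertices outside the explicit set $H$ of good vertices for $u_1$ listed just before Table~\ref{table1}, the pair $\{w_1,w_2\}$ alone must collectively recognize every vertex of $H$; equivalently, for each $u_j\in H$ at least one of $w_1,w_2$ lies in $A(u_j)\cup B(u_j)\cup C(u_j)$.

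I would then case-split on which of the classes $A$, $B$, $C$ contain $w_1$ and $w_2$, yielding six subcases. In each ``pure'' case (both $w_1,w_2$ in the same class $X$), only the $X$-column of Table~\ref{table1} is available, and one selects a family of $k-1$ witnesses in $H$ whose $X$-recognizer set has size exactly two: for $X=A$ the witnesses $u_{3k+3i+1}$ with $A$-recognizer $\{u_{3k+3i-1},u_{3k+3i+5}\}$; for $X=B$ the witnesses $u_{3i-1}$ with $B$-recognizer $\{u_{3i-3},u_{3i+3}\}$; for $X=C$ the witnesses $u_{3i}$ with $C$-recognizer $\{u_{3i-2},u_{3i+4}\}$. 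A single vertex of $X$ lies in at most two of these two-element sets, so $w_1,w_2$ cover at most four of the $k-1$ witnesses; for $k\geq 6$ at least $k-5\geq 1$ witness remains unrecognized, a contradiction.

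The three mixed cases $AB$, $AC$, $BC$ are the main obstacle, since no single witness family suffices. The remedy is to use two complementary families simultaneously---one whose recognizer column in the first class is small, and one whose recognizer column in the second class is small---and to track which values of the index parameter are killed by $w_1$ versus $w_2$ through the arithmetic progressions in Table~\ref{table1}. For instance, in case $AB$ I would combine the families $\{u_{3k+3i+1}\}$ (two $A$-recognizers) and $\{u_{3i-1}\}$ (two $B$-recognizers), which together force very restrictive conditions on the positions of $w_1\in A$ and $w_2\in B$ modulo~$3$; careful bookkeeping, plus the subscript wrap-around modulo~$n$, then produces in every sub-case an uncovered witness in $H$. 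The hypothesis $k\geq 6$ provides enough slack throughout for these pigeonhole-style arguments to succeed.
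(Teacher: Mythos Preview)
Your reduction and the three ``pure'' cases are essentially the paper's Cases~1--3 and are correct. The paper also reduces your $AC$ and $BC$ cases to pure cases by re-choosing the base vertex (Cases~4--5), so those are not the issue.

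The genuine gap is in the $AB$ mixed case (the paper's Case~6). Your plan is to produce, in every sub-case, a witness in $H$ that neither $w_1\in A$ nor $w_2\in B$ recognizes. This is impossible, because there exist triples of outer vertices that \emph{do} recognize every outer vertex: for instance $\widetilde W=\{u_1,u_2,u_3\}$ with $u_2\in A$, $u_3\in B$. One checks directly from the good-vertex lists that for every $j$ at least one of $u_1,u_2,u_3$ separates $u_j$ from $u_{j+2}$; the three shifted ``bad'' residue classes modulo~$3$ cover all residues. So no unrecognized witness exists for this choice, and your pigeonhole bookkeeping cannot close the argument.

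What the paper does instead in Case~6 is narrow $\widetilde W$ down to a short explicit list of candidates (including $\{u_1,u_2,u_3\}$ and $\{u_1,u_3,u_5\}$ up to rotation), and then for each candidate exhibit a pair of vertices---not of the form $(u_j,u_{j+2})$, e.g.\ $u_{3k+2},u_{3k+5}$---that every corresponding original $W$ fails to resolve, regardless of the inner/outer choices for $O,X,Y$. This last step is essential and is missing from your outline; it also shows why your reduction ``replace inner by outer and work only with $\widetilde W$'' is insufficient on its own: the recognition criterion transfers to $\widetilde W$, but the full resolving property does not, so once recognition fails to give a contradiction you must return to the original $W$.

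A minor additional point: you should also dispose of the case where two of the three landmarks share a subscript (one inner, one outer), which the paper handles by a quick counting argument.
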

\begin{proof}
Let us show that $\dim(P(n,3))$ has a lower bound of $4$. Suppose on the contrary that $W:=\{O,X,Y\}$ is a resolving set of $P(n,3)$.
We claim that no two vertices of $W$ have the same subscript; since if otherwise, those two vertices with the same subscript can only recognize $2k+6$ vertices on the outer cycle, and the other vertex must recognize the remaining $4k-3$ vertices on the outer cycle, which is impossible.
If $O$ is on the outer cycle, let $\widetilde{O}=O$; if otherwise, let $\widetilde{O}$ be the vertex corresponding to $O$ on the outer cycle. 
$\widetilde{X}$ and $\widetilde{Y}$ can be defined similarly. Then $\widetilde{W}:=\{\widetilde{O},\widetilde{X},\widetilde{Y}\}$ can still recognize all the vertices on the outer cycle. We
will discuss the problem in six cases, and within each case, explore what kind of positional relationship $\widetilde{O},\widetilde{X},\widetilde{Y}$ should have.
Apart from Cases $4,5$, we assume $\widetilde{O}=u_1$ by the rotational symmetry of $P(n,3)$. Note that for a vertex $u\in A$ and a vertex $u_i$, 
$u\xrightarrow{\text{Reg}}u_{i}$ if and only if $u\in A(u_i)$.

$\mathbf{Case\, 1.}$ $d^{*}(\widetilde{O},\widetilde{X})\equiv d^{*}(\widetilde{O},\widetilde{Y})\equiv0\,(\text{mod}\,3)$. It follows that $\{\widetilde{X},\widetilde{Y}\}$ can recognize $u_{3k+3i}$ for $1\leq i\leq5$. 
Suppose that $\widetilde{X}\xrightarrow{\text{Reg}}u_{3k+3}$. Since $C(u_{3k+3})\cap C(u_{3k+6})=\emptyset$, it follows that $\widetilde{X}$ cannot recognize $u_{3k+6}$, and therefore $\widetilde{Y}\xrightarrow{\text{Reg}}u_{3k+6}$.
Since $C(u_{3k+6})\cap C(u_{3k+9})=\emptyset$, it follows that $\widetilde{Y}$ cannot recognize $u_{3k+9}$, and therefore $\widetilde{X}\xrightarrow{\text{Reg}}u_{3k+9}$. Continuing in this manner, we see that $\widetilde{X}\xrightarrow{\text{Reg}}u_{3k+15}$, which implies that $\widetilde{X}\in C(u_{3k+3})\cap C(u_{3k+15})$, a contradiction.

$\mathbf{Case\, 2.}$ $d^{*}(\widetilde{O},\widetilde{X})\equiv d^{*}(\widetilde{O},\widetilde{Y})\equiv1\,(\text{mod}\,3)$. It follows that $\{\widetilde{X},\widetilde{Y}\}$ can recognize $u_{3k+3i+1}$ for $1\leq i\leq5$. 
Suppose that $\widetilde{X}\xrightarrow{\text{Reg}}u_{3k+4}$.
A proof completely analogous to that in Case 1 shows $\widetilde{X}\in A(u_{3k+4})\cap A(u_{3k+16})$, which is a contradiction.

$\mathbf{Case\, 3.}$ $d^{*}(\widetilde{O},\widetilde{X})\equiv d^{*}(\widetilde{O},\widetilde{Y})\equiv2\,(\text{mod}\,3)$.
It follows that $\{\widetilde{X},\widetilde{Y}\}$ can recognize $u_{3i-1}$ for $2\leq i\leq6$. 
Suppose that $\widetilde{X}\xrightarrow{\text{Reg}}u_{5}$.
A proof analogous to that in Case 1 shows $\widetilde{X}\in B(u_{5})\cap B(u_{17})$, which is a contradiction.

$\mathbf{Case\, 4.}$ $d^{*}(\widetilde{O},\widetilde{X}) \equiv0\,(\text{mod}\,3)$ and $d^{*}(\widetilde{O},\widetilde{Y})\equiv1\,(\text{mod}\,3)$. 
It is easy to see that $d^{*}(\widetilde{Y},\widetilde{O})\equiv d^{*}(\widetilde{Y},\widetilde{X})\equiv2\,(\text{mod}\,3)$, so that Case 4 can be reduced to Case 3. 

$\mathbf{Case\, 5.}$ $d^{*}(\widetilde{O},\widetilde{X}) \equiv0\,(\text{mod}\,3)$ and $d^{*}(\widetilde{O},\widetilde{Y})\equiv2\,(\text{mod}\,3)$. One can verify that  $d^{*}(\widetilde{Y},\widetilde{O})\equiv d^{*}(\widetilde{Y},\widetilde{X})\equiv1\,(\text{mod}\,3)$, so that Case 5 can be reduced to Case 2.

$\mathbf{Case\, 6.}$ $d^{*}(\widetilde{O},\widetilde{X}) \equiv1\,(\text{mod}\,3)$ and $d^{*}(\widetilde{O},\widetilde{Y})\equiv2\,(\text{mod}\,3)$. 
Suppose first that $\widetilde{X}\xrightarrow{\text{Reg}}u_{5}$. We declare that 
$\widetilde{X}\xrightarrow{\text{Reg}}u_{3k-1}$, since if otherwise 
$\widetilde{Y}\xrightarrow{\text{Reg}}u_{3k-1}$, then $\widetilde{Y}\in B(u_{3k-1})$. Since $B(u_{3k-1})\cap\big(B(u_{3k+7})\cup B(u_{3k+10})\big)=\emptyset$,
it follows that $\widetilde{X}$ must recognize $u_{3k+7}$ and $u_{3k+10}$, that is $\widetilde{X}\in A(u_{3k+7})\cap A(u_{3k+10})$, yielding a contradiction.  
The above implies that $\widetilde{X}\in A(u_5)\cap A(u_{3k-1})=\{u_2,u_5,u_8,u_{6k+2}\}$.

If $\widetilde{X}\in\{u_2,u_5,u_8\}$, the relation $\widetilde{X}\notin \cup_{i=1}^{k-1}A(u_{3k+3i+1})$
implies that $\widetilde{Y}\xrightarrow{\text{Reg}}u_{3k+3i+1}$ for each $1\leq i\leq k-1$, thus $\widetilde{Y}\in \cap_{i=1}^{k-1}B(u_{3k+3i+1})=\{u_3,u_{6k-3},u_{6k},u_{6k+3}\}$.
Consequently we have $\widetilde{Y}\notin B(u_{3k+6})$, indicating that $\widetilde{X}\xrightarrow{\text{Reg}}u_{3k+6}$, so $\widetilde{X}\neq u_8$. 
Now we observe that $\widetilde{X}\notin A(u_{3k})$, indicating that $\widetilde{Y}\xrightarrow{\text{Reg}}u_{3k}$,
and therefore $\widetilde{Y}\in\{u_3,u_{6k+3}\}$.
Note that $u_5\notin A(u_{3k+3})$, so if $\widetilde{X}=u_5$, then $\widetilde{Y}$ must be $u_3$.
At this point, we can summarize the following possibilities:
\begin{equation}\label{e1}
\widetilde{W}=\{u_1,u_2,u_3\},\,\{u_1,u_2,u_{6k+3}\}\,\,\text{or}\,\,\{u_1,u_5,u_3\}.
\end{equation}

If $\widetilde{X}=u_{6k+2}$, the relations $u_{6k+2}\notin \cup_{i=1}^{k-2}A(u_{3k+3i+1})$ and $u_{6k+2}\notin \cup_{i=2}^{k-1}A(u_{3i})$
imply that $\widetilde{Y}\xrightarrow{\text{Reg}}u_{3k+3i+1}$ for each $1\leq i\leq k-2$, and that $\widetilde{Y}\xrightarrow{\text{Reg}}u_{3i}$ for each $2\leq i\leq k-1$, so that $\widetilde{Y}\in\{u_3,u_{6k},u_{6k+3}\}$. Hence
\begin{equation}\label{e2}
\widetilde{W}=\{u_1,u_{6k+2},u_3\},\,\{u_1,u_{6k+2},u_{6k}\}\,\,\text{or}\,\,\{u_1,u_{6k+2},u_{6k+3}\}.
\end{equation}

Lastly, suppose that $\widetilde{Y}\xrightarrow{\text{Reg}}u_{5}$, that is $\widetilde{Y}\in \{u_3,u_9\}$. 
Clearly $\widetilde{Y}\notin B(u_{8})\cup B(u_{3k-1})$, so that both $u_{8}$ and $u_{3k-1}$ can be recognized by $\widetilde{X}$, and thus 
$\widetilde{X}\in A(u_{8})\cap A(u_{3k-1})=\{u_2,u_5,u_8,u_{11},u_{6k+2}\}$.
We point out that $\widetilde{Y}\neq u_9$; since if otherwise, then 
$\widetilde{X}\xrightarrow{\text{Reg}}u_{3k+3i+1}$ for $i=1,2$,
namely $\widetilde{X}\in A(u_{3k+4})\cap A(u_{3k+7})$, this is impossible.
Now we observe $\widetilde{X}\xrightarrow{\text{Reg}}u_{3k+6}$, which produces $\widetilde{X}\in\{u_2,u_5,u_{6k+2}\}$.
Based on this, we can deduce the following possibilities:  
\begin{equation}\label{e3}
\widetilde{W}=\{u_1,u_{6k+2},u_3\},\,\{u_{1},u_{2},u_3\}\,\,\text{or}\,\,\{u_{1},u_5,u_{3}\}.
\end{equation}

Now we only need to consider Possibilities (\ref{e1}), (\ref{e2}), and (\ref{e3}).
By the rotational symmetry, it suffices to discuss the cases where $\widetilde{W}=\{u_1,u_2,u_3\},\{u_1,u_5,u_3\}$.
In the first case, $W$ cannot resolve $u_{3k+2}$ and $u_{3k+5}$; in the second case, $W$ cannot resolve $u_{3k+1}$ and $u_{3k+8}$, which conflicts with our assumption. The proof is complete.
\end{proof}

\subsection{Case when $n=6k+4$}
In this subsection, we always assume $n=6k+4$ and $k\geq6$. For $n\equiv 4\,(\text{mod}\,6)$, the distance formulas for $u_i$ and $u_j$, as well as for $u_i$ and $v_j$, are the same as those when $n\equiv 3\,(\text{mod}\,6)$, but the distance formula for $v_i$ and $v_j$ is slightly different:
\begin{align*}
	&d(v_i,v_j)=
	\begin{cases}
		f(L),\quad   & \text{if}\,\, L\equiv 0\,(\text{mod}\,3), \\
		k+1,\quad   & \text{if}\,\, L=3k+1,\\
		f(L)+2,\quad   & \text{elsewise}, \\
	\end{cases}
\end{align*}
where $L=|i-j|\wedge (n-|i-j|)$.
According to the distance formula, we can find out all good vertices for $u_1$:
\begin{equation*}
\resizebox{.98\hsize}{!}
	{$\displaystyle u_5,u_6,\ldots,u_{3i-1},u_{3i},\ldots,u_{3k-1},u_{3k},u_{3k+2},u_{3k+4},u_{3k+5},\ldots,u_{3k+3j+1},u_{3k+3j+2},\ldots,u_{6k-2},u_{6k-1},u_{6k+4}.$}
\end{equation*}
It is worth noticing that the set of good vertices for $v_1$ is deduced from that for $u_1$ by adding $4$ new vertices $u_2,u_3,u_{6k+1},u_{6k+2}$. Hence each vertex on the outer cycle can recognize more vertices than the corresponding one on the inner cycle. 
Utilizing the approach delineated in Section \ref{subsection3}, for any given vertex, it is possible to identify all the vertices within sets $A$, $B$, or $C$ that are capable of recognizing it (see Table \ref{table2} for details).
\begin{table}[h]
	\centering
	\begin{tabular}{|c|c|c|}
		\hline
		\textbf{Set} & \textbf{Vertices} & \textbf{Range}\\\hline
		\hline
		$A(u_{3i-1})$ & $u_2,u_5,\ldots,u_{3i+2}$&$2\leq i\leq k+1$ \\
		\hline
		$A(u_{3i})$ & $u_{3i-1},u_{3i+2},\ldots,u_{3k+3i+2}$&$2\leq i\leq k$ \\
		\hline
		$A(u_{3k+3i+1})$ & $u_2,\ldots,u_{3i-1},u_{3k+3i-1},u_{3k+3i+5}$ &$1\leq i\leq k-1$\\
		\hline
		$A(u_{3k+3i+2})$ & $u_{3i+2},u_{3i+5},\ldots,u_{3k+3i+5}$&$1\leq i\leq k-1 $\\
		\hline
		$B(u_{3i-1})$ & $u_{3i-3},u_{3i+3},u_{3k+3i+3},\ldots,u_{6k+3}$&$2\leq i\leq k$ \\
		\hline
		$B(u_{3k+2})$ & $u_{3k},u_{3k+6}$& \\
		\hline
		$B(u_{3i})$ & $u_3,u_6,\ldots,u_{3i+3}$&$2\leq i\leq k$ \\
		\hline
		$B(u_{3k+3i+1})$ & $u_{3k+3i},u_{3k+3i+3},\ldots,u_{6k+3}$&$1\leq i\leq k-1$ \\
		\hline
		$B(u_{3k+3i+2})$ & $u_3,\ldots,u_{3i},u_{3k+3i},u_{3k+3i+6}$&$1\leq i\leq k-1 $\\
		\hline
		$C(u_{3i-1})$ & $u_{3i-2},u_{3i+1},\ldots,u_{3k+3i+1}$&$2\leq i\leq k+1$ \\
		\hline
		$C(u_{3i})$ &$ u_{3i-2},u_{3i+4},u_{3k+3i+4},\ldots,u_{6k+4}$&$2\leq i\leq k$ \\
		\hline
		$C(u_{3k+3i+1})$ & $u_{3i+1},u_{3i+4},\ldots,u_{3k+3i+4}$& $1\leq i\leq k-1$\\
		\hline
		$C(u_{3k+3i+2})$ & $u_{3k+3i+1},u_{3k+3i+4},\ldots,u_{6k+4}$&$1\leq i\leq k-1$ \\
		\hline
	\end{tabular}
	\caption{The result when $n=6k+4$.}\label{table2}
\end{table}

\begin{theorem}\label{4}
	If $n=6k+4$ and $k\geq6$, then $\dim(P(n,3))\geq4$.
\end{theorem}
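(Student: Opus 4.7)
The plan is to mirror the structure of Theorem \ref{3}, with Table \ref{table1} replaced throughout by Table \ref{table2}. Suppose for contradiction that $W=\{O,X,Y\}$ is a resolving set of $P(n,3)$ with $n=6k+4$. First, I would apply a counting argument analogous to that in Theorem \ref{3}, using the row sizes of Table \ref{table2}, to conclude that no two members of $W$ can share a subscript; otherwise their combined recognition capability on the outer cycle would be too small for the third vertex to cover what remains. I would then project each member of $W$ to its outer-cycle counterpart, obtaining $\widetilde{W}=\{\widetilde{O},\widetilde{X},\widetilde{Y}\}$, which must still recognize every vertex on the outer cycle; this projection step is justified by the remark just before Table \ref{table2} that, for $n\equiv 4\,(\text{mod}\,6)$, each outer vertex recognizes at least as many outer vertices as its inner counterpart.

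Next, I would split into the same six cases as in Theorem \ref{3}, based on the residues of $d^{*}(\widetilde{O},\widetilde{X})$ and $d^{*}(\widetilde{O},\widetilde{Y})$ modulo $3$, assuming $\widetilde{O}=u_1$ by rotational symmetry (except in Cases $4,5$). For Cases $1,2,3$ (matching residues) I would locate a progression of five consecutive outer vertices in the appropriate residue class and verify from Table \ref{table2} that the corresponding consecutive $C(\cdot)$, $A(\cdot)$, or $B(\cdot)$ sets are pairwise disjoint. The alternating recognition argument from Theorem \ref{3} then drives one of $\widetilde{X}$, $\widetilde{Y}$ into an empty intersection, a contradiction. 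Cases $4$ and $5$ reduce to Cases $3$ and $2$ respectively by swapping the role of the origin.

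Case $6$ (residues $1$ and $2$) is the main obstacle and will require the most bookkeeping. Following the template of Theorem \ref{3}, I would combine the recognition of $u_5$ with that of a large-subscript outer vertex, say $u_{3k-1}$, to pin $\widetilde{X}$ into a small set such as $A(u_5)\cap A(u_{3k-1})$ computed from Table \ref{table2}. Relations of the form $\widetilde{X}\notin\bigcup_{i}A(u_{3k+3i+1})$ and $\widetilde{X}\notin\bigcup_{i}A(u_{3i})$ then force $\widetilde{Y}$ into a short intersection of $B(\cdot)$'s, leaving a finite list of candidate sets $\widetilde{W}$ up to rotational symmetry. For each surviving candidate I would exhibit an explicit pair of vertices in $P(n,3)$ that share the same metric representation relative to $W$, contradicting the resolving-set hypothesis. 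The principal care needed is to monitor how the anomalous distance $d(v_i,v_j)=k+1$ at $L=3k+1$ reshapes Table \ref{table2} compared with Table \ref{table1}, since this alters several rows and may introduce additional candidate configurations that must be ruled out separately; this, together with the fact that $A(u_5)\cap A(u_{3k-1})$ is smaller here than in the $n=6k+3$ case, suggests the finite check at the end will be somewhat shorter but needs to include candidates involving inner-cycle pairs at the anomalous distance.
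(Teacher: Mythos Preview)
Your plan breaks down at the point where you transfer the case structure of Theorem~\ref{3} verbatim. For $n=6k+3$ one has $n\equiv 0\pmod 3$, so $d^{*}(\widetilde{Y},\widetilde{O})\equiv -d^{*}(\widetilde{O},\widetilde{Y})$, and this is exactly what makes Cases~4 and~5 collapse to Cases~3 and~2 there. For $n=6k+4\equiv 1\pmod 3$ this identity fails, and the reductions you state are simply wrong: e.g.\ in Case~5 (residues $0,1$) one gets $d^{*}(\widetilde{Y},\widetilde{O})\equiv 0$, not $2$, so the case does not reduce to Case~3. In the paper the reduction diagram is entirely different: Case~2 reduces to Case~1, Cases~5 and~6 each split into two subcases reducing to Case~1/Case~4 and Case~3/Case~4, and the main case analysis is Case~4 (residues $1,2$), not your Case~6.

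The alternating-recognition argument you propose for Cases~1--3 also does not survive the change of table. In Table~\ref{table1} the sets $C(u_{3k+3i})$, $A(u_{3k+3i+1})$, $B(u_{3i-1})$ are two-element sets, and consecutive ones are disjoint, which is precisely what drives the alternation. In Table~\ref{table2} the corresponding rows all acquire a long tail (for instance $B(u_{3i-1})=\{u_{3i-3},u_{3i+3},u_{3k+3i+3},\ldots,u_{6k+3}\}$ and $C(u_{3i})=\{u_{3i-2},u_{3i+4},u_{3k+3i+4},\ldots,u_{6k+4}\}$), so consecutive sets overlap and the argument collapses. The paper handles Case~3 with a different direct disjointness argument using $B(u_6)$, $B(u_{3k+2})$, $B(u_{3k-1})$, and Case~1 cannot be dispatched structurally at all: it requires pinning $\widetilde{W}$ to a short explicit list (e.g.\ $\{u_1,u_{3k-2},u_{6k+4}\}$, $\{u_1,u_{3k+1},u_{6k+4}\}$, $\{u_1,u_{3k-2},u_{6k+1}\}$) and then, for each of the eight inner/outer choices of $O,X,Y$, exhibiting an unresolved pair. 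So the finite check you anticipated only at the end in fact appears in both Case~1 and Case~4, and is considerably longer than in Theorem~\ref{3}.
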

\begin{proof}
Suppose on the contrary that $W:=\{O,X,Y\}$ is a resolving set of $P(n,3)$.
We claim that no two vertices of $W$ have the same subscript; since if otherwise, those two vertices with the same subscript can only recognize $2k+6$ vertices on the outer cycle, and the other vertex must recognize the remaining $4k-2$ vertices on the outer cycle, which is impossible.
Let $\widetilde{O},\widetilde{X},\widetilde{Y}$ be defined in the same way as $\widetilde{O},\widetilde{X},\widetilde{Y}$ were in Theorem \ref{3}. We see that $\widetilde{W}:=\{\widetilde{O},\widetilde{X},\widetilde{Y}\}$ can still recognize all the vertices on the outer cycle.
Let us discuss this problem in several cases. Apart from Cases $2,5,6$, we assume $\widetilde{O}=u_1$ by the rotational symmetry of $P(n,3)$.

$\mathbf{Case\, 1.}$ $d^{*}(\widetilde{O},\widetilde{X})\equiv d^{*}(\widetilde{O},\widetilde{Y})\equiv0\,(\text{mod}\,3)$. By symmetry, assume that $\widetilde{X}\xrightarrow{\text{Reg}}u_{5}$. 
It follows from the relation $C(u_5)\cap C(u_{6k-1})=\emptyset$ that $\widetilde{Y}\xrightarrow{\text{Reg}}u_{6k-1}$. Notice that $u_{3k}$ must be recognized by $\widetilde{X}$ or $\widetilde{Y}$. 
If $\widetilde{X}\xrightarrow{\text{Reg}}u_{3k}$, then 
$\widetilde{X}\in C(u_{5})\cap C(u_{3k})=\{u_{3k-2},u_{3k+4}\}$, so that  $\widetilde{Y}\xrightarrow{\text{Reg}}u_{3k-3}$, and therefore 
$\widetilde{Y}\in C(u_{3k-3})\cap C(u_{6k-1})=\{u_{6k+1},u_{6k+4}\}$.
If $\widetilde{Y}\xrightarrow{\text{Reg}}u_{3k}$, then $\widetilde{Y}=u_{6k+4}$, and thus $\widetilde{X}\xrightarrow{\text{Reg}}u_{6k-2}$, implying that $\widetilde{X}\in C(u_{5})\cap C(u_{6k-2})=\{u_{3k-2},u_{3k+1},u_{3k+4},u_{3k+7}\}$.
Base on symmetry, it suffices to discuss the following possibilities:
\begin{equation}\label{f1}
\widetilde{W}=\{u_1,u_{3k-2},u_{6k+4}\},\,\{u_1,u_{3k+1},u_{6k+4}\}\,\text{or}\,\{u_1,u_{3k-2},u_{6k+1}\}.
\end{equation}
We claim that $\widetilde{W}\neq\{u_1,u_{3k+1},u_{6k+4}\}$, since if otherwise, $W$ cannot resolve $u_{3k-9}$ and $u_{3k+13}$, therefore it is not a resolving set.

Now suppose that $\widetilde{W}=\{u_1,u_{3k-2},u_{6k+4}\}$. 
Because $\{O,v_{3k-2},Y\}$ cannot resolve $u_{3k-1}$ and $u_{3k+1}$, we get 
$X=u_{3k-2}$. For the remaining $4$ cases, one can verify that
\begin{align*}
	&r(v_2|\{u_1,u_{3k-2},u_{6k+4}\})=r(v_{6k+2}|\{u_1,u_{3k-2},u_{6k+4}\})=(2,k+1,3),\\
	&r(v_2|\{u_1,u_{3k-2},v_{6k+4}\})=r(v_{6k+2}|\{u_1,u_{3k-2},v_{6k+4}\})=(2,k+1,4),\\
	&r(v_{3k-4}|\{v_1,u_{3k-2},u_{6k+4}\})=r(v_{3k+4}|\{v_1,u_{3k-2},u_{6k+4}\})=(k+1,3,k+1),\\
	&r(u_{6k+1}|\{v_1,u_{3k-2},v_{6k+4}\})=r(u_{6k+3}|\{v_1,u_{3k-2},v_{6k+4}\})=(3,k+3,2).
\end{align*}
Thus there are always two vertices that cannot be resolved by $W$, a contradiction.

Suppose next that $\widetilde{W}=\{u_1,u_{3k-2},u_{6k+1}\}$. Because $\{O,v_{3k-2},Y\}$ cannot resolve $u_{3k}$ and $u_{3k+2}$, we easily deduce that $X=u_{3k-2}$. For the remaining $4$ cases, one can verify that
\begin{align*}
	&r(v_{6k+2}|\{u_1,u_{3k-2},u_{6k+1}\})=r(v_{6k+4}|\{u_1,u_{3k-2},u_{6k+1}\})=(2,k+1,2),\\
	&r(v_{3k-3}|\{u_1,u_{3k-2},v_{6k+1}\})=r(v_{3k+1}|\{u_1,u_{3k-2},v_{6k+1}\})=(k+1,2,k),\\
	&r(v_{3k-4}|\{v_1,u_{3k-2},u_{6k+1}\})=r(v_{3k+2}|\{v_1,u_{3k-2},u_{6k+1}\})=(k+1,3,k+2),\\
	&r(u_{6k+2}|\{v_1,u_{3k-2},v_{6k+1}\})=r(u_{6k+4}|\{v_1,u_{3k-2},v_{6k+1}\})=(2,k+2,2).
\end{align*}
Thus there are always two vertices that cannot be resolved by $W$, a contradiction.

$\mathbf{Case\, 2.}$ $d^{*}(\widetilde{O},\widetilde{X})\equiv d^{*}(\widetilde{O},\widetilde{Y})\equiv1\,(\text{mod}\,3)$. Assume that $d^{*}(\widetilde{O},\widetilde{X})<d^{*}(\widetilde{O},\widetilde{Y})$. One can verify that $d^{*}(\widetilde{X},\widetilde{Y})\equiv d^{*}(\widetilde{X},\widetilde{O})\equiv0\,(\text{mod}\,3)$, so that Case 2 can be reduced to Case 1.

$\mathbf{Case\, 3.}$ $d^{*}(\widetilde{O},\widetilde{X})\equiv d^{*}(\widetilde{O},\widetilde{Y})\equiv2\,(\text{mod}\,3)$. Without loss of generality, we assume that  $\widetilde{X}\xrightarrow{\text{Reg}}u_{6}$, namely $\widetilde{X}\in \{u_3,u_6,u_9\}$. Since $B(u_6)\cap B(u_{3k+2})=\emptyset$, we see that $\widetilde{Y}\xrightarrow{\text{Reg}}u_{3k+2}$, namely $\widetilde{Y}\in  \{u_{3k},u_{3k+6}\}$. 
It follows from the relation $\left(B(u_6)\cup B(u_{3k+2})\right)\cap B(u_{3k-1})=\emptyset$ that neither $\widetilde{X}$ nor $\widetilde{Y}$ can recognize $u_{3k-1}$, so $W$ cannot possibly be a resolving set.

$\mathbf{Case\, 4.}$ $d^{*}(\widetilde{O},\widetilde{X})\equiv1\,(\text{mod}\,3)$ and $ d^{*}(\widetilde{O},\widetilde{Y})\equiv2\,(\text{mod}\,3)$. 
Suppose first that $\widetilde{X}\xrightarrow{\text{Reg}}u_{5}$, namely $\widetilde{X}\in \{u_2,u_5,u_8\}$. We use $\widetilde{X}\notin A(u_{12})$ to see that $\widetilde{Y}\xrightarrow{\text{Reg}}u_{12}$, so that 
$\widetilde{Y}\in\{u_3,u_6,u_9,u_{12},u_{15}\}$. Consequently $\widetilde{X}\xrightarrow{\text{Reg}}u_{3k+3i+1}$ for each $1\leq i\leq k-1$, then it follows that $\widetilde{X}=u_2$. Now we observe that $\widetilde{Y}\xrightarrow{\text{Reg}}u_{3k+5}$, and therefore $\widetilde{Y}=u_3$.
This argument yields the following possibility:
\begin{equation}\label{f2}
	\widetilde{W}=\{u_1,u_2,u_3\}.
\end{equation}
At this point, $W$ cannot resolve $u_{3k+2}$ and $u_{3k+6}$, and is not a resolving set.

Suppose next that $\widetilde{Y}\xrightarrow{\text{Reg}}u_{5}$, namely $\widetilde{Y}\in \{u_3,u_9,u_{3k+9},u_{3k+12},\ldots,u_{6k+3}\}$.
Note that $u_8$ must be recognized by $\widetilde{X}$ or $\widetilde{Y}$.
If $\widetilde{X}\xrightarrow{\text{Reg}}u_{8}$, then the relation
$\widetilde{X}\notin\cup_{i=4}^{k-1}A(u_{3k+3i+2})$ implies that $\widetilde{Y}\xrightarrow{\text{Reg}}u_{3k+3i+2}$ for each $4\leq i\leq k-1$, therefore $\widetilde{Y}\in\{u_3,u_9\}$.
Now we see that $\widetilde{X}\xrightarrow{\text{Reg}}u_{3k+4}$, so that $\widetilde{X}=u_2$. Then $\widetilde{Y}\xrightarrow{\text{Reg}}u_{3k+5}$,
so that $\widetilde{Y}=u_3$.
At this point, $\widetilde{W}$ can only take vertices in the manner described in Possibility (\ref{f2}), and there is nothing left for us to prove.
On the other hand, if $\widetilde{Y}\xrightarrow{\text{Reg}}u_{8}$, then $\widetilde{Y}\in B(u_5)\cap B(u_8)=\{u_{3k+12},u_{3k+15},\ldots,u_{6k+3}\}$.
It follows that
$\widetilde{X}\xrightarrow{\text{Reg}}u_{3k+2}$ and that $\widetilde{X}\xrightarrow{\text{Reg}}u_{3k}$, implying $\widetilde{X}\in\{u_{3k-1},u_{3k+2},u_{3k+5}\}$.
This yields $\widetilde{Y}\xrightarrow{\text{Reg}}u_{6k-2}$, so that $\widetilde{Y}\in\{u_{6k-3},u_{6k},u_{6k+3}\}$.
If $\widetilde{Y}=u_{6k-3}$, then $\widetilde{X}\xrightarrow{\text{Reg}}u_{3k-4}$, so that $\widetilde{X}=u_{3k-1}$; if $\widetilde{Y}=u_{6k}$, then $\widetilde{X}\xrightarrow{\text{Reg}}u_{3k-1}$, so that $\widetilde{X}\in\{u_{3k-1},u_{3k+2}\}$.  
Due to symmetry, we conclude with the following possibilities:
\begin{equation}\label{f3}
	\widetilde{W}=\{u_1,u_{3k-1},u_{6k+3}\},\{u_1,u_{3k+2},u_{6k+3}\},\{u_1,u_{3k-1},u_{6k}\}\,\text{or}\,\{u_1,u_{3k-1},u_{6k-3}\}.
\end{equation}
The following table shows that regardless of whether $O$, $X$ and $Y$ are on the outer or inner circle, there exists a pair of vertices that $W$ cannot resolve, and thus $W$ cannot serve as a resolving set for the graph.
\begin{table}[h]
	\centering
	\begin{tabular}{|c|c|}
		\hline
		$\widetilde{W}$& \textbf{Vertex pairs indistinguishable by $W$}\\\hline
		\hline
		$\{u_1,u_{3k-1},u_{6k+3}\}$ & $u_{3k-9}$, $u_{3k+11}$ \\
		\hline
		$\{u_1,u_{3k+2},u_{6k+3}\}$ & $u_{3k}$, $u_{3k+4}$ \\
		\hline
		$\{u_1,u_{3k-1},u_{6k-3}\}$ &  $u_{3k-3}$, $u_{3k+1}$\\
		\hline
		$\{u_1,u_{3k-1},u_{6k}\}$ &  $u_{3k-3}$, $u_{3k+1}$\\
		\hline
	\end{tabular}
\end{table}

$\mathbf{Case\, 5.}$ $d^{*}(\widetilde{O},\widetilde{X})\equiv0\,(\text{mod}\,3)$ and $ d^{*}(\widetilde{O},\widetilde{Y})\equiv1\,(\text{mod}\,3)$. 
If $d^{*}(\widetilde{O},\widetilde{X})<d^{*}(\widetilde{O},\widetilde{Y})$, then $d^{*}(\widetilde{Y},\widetilde{O})\equiv d^{*}(\widetilde{Y},\widetilde{X})\equiv0\,(\text{mod}\,3)$, so that Case 5 can be reduced to Case 1. If $d^{*}(\widetilde{O},\widetilde{X})>d^{*}(\widetilde{O},\widetilde{Y})$, then $d^{*}(\widetilde{X},\widetilde{O})\equiv1 \,(\text{mod}\,3)$ and $d^{*}(\widetilde{X},\widetilde{Y})\equiv2 \,(\text{mod}\,3)$, so that Case 5 can be reduced to Case 4.

$\mathbf{Case\, 6.}$ $d^{*}(\widetilde{O},\widetilde{X})\equiv0\,(\text{mod}\,3)$ and $ d^{*}(\widetilde{O},\widetilde{Y})\equiv2\,(\text{mod}\,3)$. 
If $d^{*}(\widetilde{O},\widetilde{X})<d^{*}(\widetilde{O},\widetilde{Y})$, then $d^{*}(\widetilde{Y},\widetilde{O})\equiv d^{*}(\widetilde{Y},\widetilde{X})\equiv2\,(\text{mod}\,3)$, so that Case 6 can be reduced to Case 3. If $d^{*}(\widetilde{O},\widetilde{X})>d^{*}(\widetilde{O},\widetilde{Y})$, then $d^{*}(\widetilde{Y},\widetilde{X})\equiv1 \,(\text{mod}\,3)$ and $d^{*}(\widetilde{Y},\widetilde{O})\equiv2 \,(\text{mod}\,3)$, so that Case 6 can be reduced to Case 4.

We conclude that $W$ cannot be a resolving set of $P(n,3)$, and thus the metric dimension of $P(n,3)$ has a lower bound of $4$. The proof is complete.
\end{proof}

\subsection{Case when $n=6k+5$}
Throughout this subsection, we consistently assume that $n=6k+5$ and $k\geq6$. For $n\equiv 5\,(\text{mod}\,6)$, the distance between two vertices in $P(n,3)$ is
\begin{align*}
	&d(u_i,v_j)=
	\begin{cases}
		f(L)+1,    &\text{if}\,\, L\leq3k+1, \\
		k+2,    &\text{if}\,\, L=3k+2,
	\end{cases} \\
	&d(u_i,u_j)=
	\begin{cases}
		L,    &\text{if}\,\, L\leq2, \\
		f(L)+2,    &\text{if}\,\, 3\leq L\leq3k+1, \\
		k+3,    &\text{if}\,\, L=3k+2,
	\end{cases}\\
	&d(v_i,v_j)=
	\begin{cases}
		f(L),\quad   & \text{if } L\equiv 0 \,(\text{mod}\,3), \\
		k+2,\quad   & \text{if } L=3k-1, \\
		k+1,\quad   & \text{if } L=3k+2,\\
		f(L)+2,\quad   & \text{elsewise},
	\end{cases}
\end{align*}
where $L=|i-j|\wedge (n-|i-j|)$.
According to this distance formula, we can find out all good vertices for $u_1$:
$$
u_5,u_6,u_8,u_9,\ldots,u_{3i-1},u_{3i},\ldots,u_{6k-4},u_{6k-3},u_{6k-1},u_{6k},u_{6k+5}.
$$
Noticing that the set of good vertices for $v_1$ is deduced from that for $u_1$ by adding $4$ new vertices $u_2,u_3,u_{6k+2},u_{6k+3}$. Hence each vertex on the outer cycle can recognize more vertices than the corresponding one on the inner cycle. 
Using the approach detailed in Subsection \ref{subsection3}, for any given vertex, we can identify all the vertices within sets $A$, $B$, or $C$ that are capable of recognizing it (see Table \ref{table3} for details).  Moving forward, we present the following theorem.
\begin{table}[h]
	\centering
	\begin{tabular}{|c|c|c|}
		\hline 
		\textbf{Set} & \textbf{Vertices} & \textbf{Range}\\ \hline
		\hline
		$A(u_{3i-1})$ & $u_2,u_5,\ldots,u_{3i+2}$&$2\leq i\leq 2k$ \\
		\hline
		$A(u_{3i})$ & $u_{3i-1},u_{3i+2},\ldots,u_{6k+5}$&$2\leq i\leq 2k$ \\
		\hline
		$B(u_{3i-1})$ & $u_{3i-3},u_{3i+3}$ &$2\leq i\leq 2k$\\
		\hline
		$B(u_{3i})$ & $u_{3},u_{6},\ldots,u_{3i+3}$&$2\leq i\leq 2k$ \\
		\hline
		$C(u_{3i-1})$ & $u_{3i-2},u_{3i+1},\ldots,u_{6k+4}$&$2\leq i\leq 2k$ \\
		\hline
		$C(u_{3i})$ & $u_{3i-2},u_{3i+4}$&$2\leq i\leq 2k$ \\
		\hline
		\end{tabular}
	\caption{The result when $n=6k+5$.}\label{table3}
\end{table}

\begin{theorem}\label{5}
	If $n=6k+5$ and $k\geq6$, then $\dim(P(n,3))\geq4$.
\end{theorem}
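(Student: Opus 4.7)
The plan is to mimic the structure of Theorems~\ref{3} and~\ref{4}. Assume for contradiction that $W=\{O,X,Y\}$ is a resolving set of $P(n,3)$. First, no two members of $W$ may share a subscript: a single vertex recognizes at most $2k+6$ outer vertices (read off from the list of good vertices for $u_1$), so if two of them had the same subscript, the third would have to recognize $n-(2k+6)=4k-1$ outer vertices, contradicting $4k-1>2k+6$ for $k\geq 6$. Let $\widetilde{W}=\{\widetilde{O},\widetilde{X},\widetilde{Y}\}$ be the projection to the outer cycle; it still recognizes every outer vertex.

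Next I would split into six cases according to the residues of $d^{*}(\widetilde{O},\widetilde{X})$ and $d^{*}(\widetilde{O},\widetilde{Y})$ modulo $3$. Because $n\equiv 2\pmod 3$, relabeling which vertex plays the role of ``origin'' yields the reductions $(2,2)\to(0,0)$ via $\widetilde{X}$, $(0,2)\to(0,0)$ via $\widetilde{Y}$, $(1,2)\to(1,1)$ via $\widetilde{X}$, and $(0,1)\to(1,1)$ via $\widetilde{Y}$. So only Cases $(0,0)$ and $(1,1)$ need direct work; by rotational symmetry I take $\widetilde{O}=u_1$.

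Case $(0,0)$ is short: here $\widetilde{X},\widetilde{Y}\in C$, and Table~\ref{table3} shows $|C(u_{3i})|=2$ for every $2\leq i\leq 2k$, so each vertex in $C$ recognizes at most two of the $2k-1$ vertices $u_{3i}$, $2\leq i\leq 2k$, none of which $u_1$ recognizes. Hence $\widetilde{X}$ and $\widetilde{Y}$ together cover at most $4<2k-1$ of them when $k\geq 6$, a contradiction. Case $(1,1)$ is the heart of the argument. Writing $\widetilde{X}=u_{3a+2}$, $\widetilde{Y}=u_{3b+2}\in A$ and exploiting the ``prefix/suffix'' form of $A(u_{3i-1})$ and $A(u_{3i})$ in Table~\ref{table3}, the requirement that every $u_{3i-1}$ and $u_{3i}$ with $2\leq i\leq 2k$ be recognized forces $\min(a,b)\leq 2$ and $\max(a,b)\geq 2k-1$; the remaining good-for-$u_1$ vertex $u_n$ is then verified to be recognized automatically in every surviving configuration. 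Thus, up to swapping names, $\widetilde{X}\in\{u_2,u_5,u_8\}$ and $\widetilde{Y}\in\{u_{6k-1},u_{6k+2},u_n\}$, giving nine candidate configurations, which the reflection of $P(n,3)$ across $u_1$ collapses to roughly six equivalence classes.

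For each class, and for each of the eight placements of $(O,X,Y)$ among outer and inner vertices, I would exhibit a pair of vertices in $V(P(n,3))$ whose metric representations with respect to $W$ coincide, using the distance formulas at the start of this subsection. This mirrors the explicit configuration tables appearing in the proof of Theorem~\ref{4}. The main obstacle is exactly this bookkeeping: finding, for each configuration, a non-resolvable pair (typically involving the inner cycle) and verifying the outcome across all eight outer/inner assignments. The counting argument in Case $(0,0)$, the cross-case reductions, and the subscript-distinctness claim are routine analogs of the reasoning already employed in Theorems~\ref{3} and~\ref{4}.
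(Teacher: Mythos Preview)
Your reduction scheme is incomplete. When $n\equiv 2\pmod 3$, the three clockwise arcs cut out by $\widetilde{O},\widetilde{X},\widetilde{Y}$ have residues summing to $2$ modulo $3$, so their residue multiset is one of $\{0,0,2\}$, $\{0,1,1\}$, or $\{1,2,2\}$. Your reductions $(2,2)\to(0,0)$, $(0,2)\to(0,0)$, $(1,2)\to(1,1)$, $(0,1)\to(1,1)$ are valid only when the underlying arc-type is $\{0,0,2\}$ or $\{0,1,1\}$; they tacitly assume a particular cyclic order of the three points. In the arc-type $\{1,2,2\}$, the three possible choices of origin give residue pairs $\{0,1\}$, $\{0,2\}$, $\{1,2\}$, and \emph{no} relabeling produces $(0,0)$ or $(1,1)$. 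Concretely, $\widetilde{O}=u_1$, $\widetilde{Y}=u_2$, $\widetilde{X}=u_4$ falls under your ``$(0,1)$'' heading but cannot be moved into either of your two direct cases.

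This missing arc-type is precisely what the paper handles as its Case~4 (there phrased as $d^{*}(\widetilde{O},\widetilde{X})\equiv 0$, $d^{*}(\widetilde{O},\widetilde{Y})\equiv 1$), and it is by far the longest part of the argument: one first pins $\widetilde{Y}$ to $\{u_{6k-4},u_{6k-1},u_{6k+2},u_{6k+5}\}$ via $A(u_{6k})\cup A(u_{6k-3})$, then works through each possibility with a sizeable table of unresolved pairs across all outer/inner placements. Your treatments of $(0,0)$ and $(1,1)$ are fine (your counting argument for $(0,0)$ is actually cleaner than the paper's chaining), but without a separate analysis of the $\{1,2,2\}$ configuration the proof has a genuine gap.
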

\begin{proof}
Suppose on the contrary that $W:=\{O,X,Y\}$ is a resolving set of $P(n,3)$.	
A proof analogous to that in Theorem \ref{3} shows no two vertices of $W$ have the same subscript.	
Then let $\widetilde{O},\widetilde{X},\widetilde{Y}$ be as defined in Theorem \ref{3}. It is clear that $\widetilde{W}:=\{\widetilde{O},\widetilde{X},\widetilde{Y}\}$ can recognize all the vertices on the outer cycle.
Like before, we will discuss this problem in six cases.	Apart from Cases $5,6$, we assume $\widetilde{O}=u_1$
	
$\mathbf{Case\, 1.}$ $d^{*}(\widetilde{O},\widetilde{X})\equiv d^{*}(\widetilde{O},\widetilde{Y})\equiv0\,(\text{mod}\,3)$. 
It follows that $\{\widetilde{X},\widetilde{Y}\}$ can recognize $u_{3i}$ for $2\leq i\leq6$.
Suppose that $\widetilde{X}\xrightarrow{\text{Reg}}u_{6}$.
A proof analogous to that in Theorem \ref{3} shows $\widetilde{X}\in C(u_{6})\cap C(u_{18})$, which is a contradiction.

$\mathbf{Case\, 2.}$ $d^{*}(\widetilde{O},\widetilde{X})\equiv d^{*}(\widetilde{O},\widetilde{Y})\equiv2\,(\text{mod}\,3)$. 
We see that $\{\widetilde{X},\widetilde{Y}\}$ can recognize $u_{3i-1}$ for $2\leq i\leq6$.
If we assume $\widetilde{X}\xrightarrow{\text{Reg}}u_{5}$, then a similar proof can be used to deduce that $\widetilde{X}\in B(u_{5})\cap B(u_{17})$, which is a contradiction.

$\mathbf{Case\, 3.}$ $d^{*}(\widetilde{O},\widetilde{X})\equiv d^{*}(\widetilde{O},\widetilde{Y})\equiv1\,(\text{mod}\,3)$. 
Based on the symmetry, we assume that $\widetilde{X}\xrightarrow{\text{Reg}}u_{5}$, namely $\widetilde{X}\in A(u_5)=\{u_2,u_5,u_8\}$.
It follows from the relation $A(u_5)\cap A(u_{6k})=\emptyset$ that $\widetilde{Y}\xrightarrow{\text{Reg}}u_{6k}$, namely $\widetilde{Y}\in A(u_{6k})=\{u_{6k-1},u_{6k+2},u_{6k+5}\}$.

We shall demonstrate that $\widetilde{X}\neq u_8$. To do this, we use proof by contradiction, assuming instead that $\widetilde{X}= u_8$. Because $O$, $Y$, and $v_8$ cannot recognize $u_5$, it follows that $X=u_8$. Since 
$X(=u_8)$, $Y$, and $u_1$ cannot resolve $v_2$ and $v_4$, we have $O=v_1$.
At this point, if $Y$ is on the inner cycle, then $W$ cannot recognize $u_2$; if $Y$ is on the outer cycle, then $W$ cannot resolve $v_{6k+4}$ and $v_3$. Now that we have proven $\widetilde{X}\neq u_8$, by symmetry, we can similarly prove that $\widetilde{Y}\neq u_{6k-1}$. We can also rule out the possibility that $\widetilde{W}=\{u_1,u_2,u_{6k+5}\}$, because otherwise,
$W$ cannot resolve $u_{3k+1}$ and $u_{3k+6}$.

Now let us discuss the case where $\widetilde{W}=\{u_1,u_5,u_{6k+2}\}$. One can verify that $\{u_1,u_5,Y\}$ cannot resolve $v_2$ and $v_4$, and that $\{v_1,v_5,Y\}$ cannot resolve $u_2$ and $u_4$, implying that $O$ and $X$ must be on different cycles. By symmetry, it can be similarly proved that $O$ and $Y$ must be on different cycles. Therefore, we have deduced two cases: $W=\{u_1,v_5,v_{6k+2}\}$ or $\{v_1,u_5,u_{6k+2}\}$.
In the first case, $W$ cannot resolve $u_3$ and $u_{6k+4}$, while in the second case, it cannot resolve $v_3$ and $v_{6k+4}$, always leading to a contradiction.
  
Finally, based on symmetry, it suffices to consider the case $\widetilde{W}=\{u_1,u_5,u_{6k+5}\}$. 
Using a proof similar to that in the previous paragraph, it is possible to rule out the cases where $\{O,X\}=\{u_1,u_5\}$ and $W=\{v_1,v_5,v_{6k+5}\}$,
and the following table shows that in the remaining $5$ cases, $W$ cannot serve as a resolving set.
\begin{table}[h]
	\centering
	\begin{tabular}{|c|c||c|c|}
		\hline
		$\{O,X,Y\}$& \textbf{unresolveable pairs}&$\{O,X,Y\}$ & \textbf{unresolveable pairs}\\\hline
		\hline
		$\{u_1,v_5,u_{6k+5}\}$ & $v_{3k+3},v_{3k+4}$&$\{v_1,u_5,u_{6k+5}\}$ & $v_{3},v_{6k+4}$ \\
		\hline
		$\{u_1,v_5,v_{6k+5}\}$ & $u_{3},u_{6k+4}$&	$\{v_1,u_5,v_{6k+5}\}$ & $u_{3k+3},v_{3k+7}$ \\
		\hline
		$\{v_1,v_5,u_{6k+5}\}$ & $v_{3k},v_{3k+7}$ &\multicolumn{2}{c|}{ }\\
		\hline
		\end{tabular}
\end{table}

$\mathbf{Case\, 4.}$ $d^{*}(\widetilde{O},\widetilde{X})\equiv0\,(\text{mod}\,3)$ and $ d^{*}(\widetilde{O},\widetilde{Y})\equiv1\,(\text{mod}\,3)$. 
Since $C(u_{6k})\cap C(u_{6k-3})=\emptyset$, at least one of $u_{6k}$ and $u_{6k-3}$ can be recognized by $\widetilde{Y}$, so that $\widetilde{Y}\in\{u_{6k-4},u_{6k-1},u_{6k+2},u_{6k+5}\}$.

Suppose first that $\widetilde{Y}=u_{6k-4}$. It follows that $\widetilde{X}\xrightarrow{\text{Reg}}u_{6k}$, implying that $\widetilde{X}\in\{u_{6k-2},u_{6k+4}\}$.
Because $\{X,u_{6k-4}\}$ cannot distinguish between $u_{6k+3}$ and $u_{6k+5}$, nor between $v_{6k+3}$ and $v_{6k+5}$, while $O$ can only distinguish between one of the pairs, we thus have $Y=v_{6k-4}$.
It is easy to verify that $\{O,X,Y=v_{6k-4}\}$ cannot recognize $u_{6k-3}$, leading to the contradiction.
 
Suppose next that $\widetilde{Y}=u_{6k-1}$. It follows from $\widetilde{Y}\notin A(u_{6k-7})$ that $\widetilde{X}\xrightarrow{\text{Reg}}u_{6k-7}$, so $\widetilde{X}\in\{u_{6k-8},u_{6k-5},u_{6k-2},u_{6k+1},u_{6k+4}\}$. 
We first exclude the case where $\widetilde{X}=u_{6k+1}$. Suppose on the contrary that $\widetilde{X}=u_{6k+1}$.
Because $\{u_1,X\}$ cannot distinguish between $u_{6k}$ and $u_{6k+2}$, nor between $v_{6k}$ and $v_{6k+2}$, while $Y$ can only distinguish between one of the pairs, we thus have $O=v_1$, and therefore $Y=u_{6k-1}$. One can verify that $\{v_1,u_{6k+1},u_{6k-1}\}$ cannot resolve $v_{3k-3}$ and $v_{3k}$, and that $\{v_1,v_{6k+1},u_{6k-1}\}$ cannot resolve $u_{6k+3}$ and $u_{6k+5}$, yielding the contradiction.
Because $\{X,u_{6k-1}\}$ cannot distinguish between $u_{6k+3}$ and $u_{6k+5}$, nor between $v_{6k+3}$ and $v_{6k+5}$, while $O$ can only distinguish between one of the pairs, we thus have $Y=v_{6k-1}$, and therefore $O=u_{1}$.
At this point, neither $O=u_1$ nor $Y=v_{6k-1}$ can recognize $u_{6k}$, while $X$ can only recognize $u_{6k}$ when it is equal to $u_{6k-2}$ or $u_{6k+4}$. On the other hand, $\{u_1,u_{6k-2},v_{6k-1}\}$ cannot resolve $v_{6k-3}$ and $v_{6k+1}$, and $\{u_1,u_{6k+4},v_{6k-1}\}$ cannot resolve
$v_{3k}$ and $v_{3k+3}$. There are always two vertices that cannot be distinguished by $W$, hence $W$ is not a resolving set, leading to the contradiction.

Now suppose that $\widetilde{Y}=u_{6k+2}$. It follows from $\widetilde{Y}\notin A(u_{6k-4})$ that $\widetilde{X}\xrightarrow{\text{Reg}}u_{6k-4}$, namely $\widetilde{X}\in\{u_{6k-5},u_{6k-2},u_{6k+1},u_{6k+4}\}$.
If $\widetilde{X}=u_{6k+4}$, then $\{O,X,Y\}$ cannot resolve $u_{6k-3}$ and $u_6$, therefore the case where $\widetilde{X}=u_{6k+4}$ can be excluded.
If $\widetilde{X}\in\{u_{6k-5},u_{6k-2}\}$, then the pairs of vertices $u_{6k+3}$ and $u_{6k+5}$, and $v_{6k+3}$ and $v_{6k+5}$ cannot be distinguished by $X$, which implies that $O$ and $Y$ must be on different cycles. Rows $2$ to $9$ of Table \ref{tttt1} indicate that in the remaining $16$ cases, there exist two vertices that cannot be resolved by $W$, hence $W$ is not a resolving set, leading to the contradiction.

Finally, suppose that $\widetilde{Y}=u_{6k+5}$. It follows from $\widetilde{Y}\notin A(u_{6k-1})$ that $\widetilde{X}\xrightarrow{\text{Reg}}u_{6k-1}$, so $\widetilde{X}\in\{u_{6k-2},u_{6k+1},u_{6k+4}\}$.
Because $\{O,u_{6k+4},v_{6k+4},Y\}$ cannot resolve $u_{3k+2}$ and $u_{3k+3}$, we obtain $\widetilde{X}\neq u_{6k+4}$.
Since $\{v_1,u_{6k-2}\}$ cannot resolve $u_{6k+2}$ and $u_{6k+4}$, nor can it resolve $v_{6k+2}$ and $v_{6k+4}$, and since 
$Y$ can only distinguish one out of the two pairs of vertices, we see that if $X=u_{6k-2}$, then $O=u_1$, and therefore $Y=v_{6k+5}$. On the other hand, if $X=v_{6k-2}$, then $\{O,Y\}\neq\{v_1,v_{6k+5}\}$, this is because $\{v_1,v_{6k-2},v_{6k+5}\}$ cannot resolve $u_{6k+2}$ and $u_{6k+4}$.
Rows $10$ to $15$ of Table \ref{tttt1} demonstrate that in the remaining $12$ cases, $W$ cannot be the resolving set.

\begin{table}[h]
	\centering
	\begin{tabular}{|c|c||c|c|}
		\hline
		$\{O,X,Y\}$& \textbf{unresolveable pairs}&$\{O,X,Y\}$& \textbf{unresolveable pairs}\\\hline
		\hline
		$\{v_1,u_{6k-5},u_{6k+2}\}$ & $v_{6k-4},v_{6k-2}$&$\{v_1,u_{6k+1},v_{6k+2}\}$ & $v_{3k-4},v_{3k+3}$ \\
		\hline
		$\{v_1,v_{6k-5},u_{6k+2}\}$ & $u_{6k-4},u_{6k-2}$&$\{v_1,u_{6k+1},u_{6k+2}\}$ & $v_{3k-4},v_{3k+3}$ \\
		\hline
		$\{u_1,u_{6k-5},v_{6k+2}\}$ & $v_{6k},v_{6k+4}$&$\{v_1,v_{6k+1},u_{6k+2}\}$ & $u_{6k},u_{6k+4}$ \\
		\hline
		$\{u_1,v_{6k-5},v_{6k+2}\}$ & $u_{6k-4},u_{6k-2}$&$\{v_1,v_{6k+1},v_{6k+2}\}$ & $u_{6k+3},u_{6k+5}$ \\
		\hline
		$\{v_1,u_{6k-2},u_{6k+2}\}$ & $v_{6k-1},v_{6k+1}$&$\{u_1,u_{6k+1},v_{6k+2}\}$ & $v_{6k},v_{6k+4}$ \\
		\hline
		$\{v_1,v_{6k-2},u_{6k+2}\}$ & $u_{6k},u_{6k+4}$&$\{u_1,u_{6k+1},u_{6k+2}\}$ & $v_{6k+3},v_{6k+5}$ \\
		\hline
		$\{u_1,u_{6k-2},v_{6k+2}\}$ & $v_{6k},v_{6k+4}$&$\{u_1,v_{6k+1},u_{6k+2}\}$ & $v_{6k+3},v_{6k+5}$ \\
		\hline
		$\{u_1,v_{6k-2},v_{6k+2}\}$ & $u_{6k-1},u_{6k+1}$&$\{u_1,v_{6k+1},v_{6k+2}\}$ & $u_{3k-1},v_{3k+3}$ \\\hline
		\hline
		$\{u_1,u_{6k-2},v_{6k+5}\}$ & $v_{6k-10},u_{6k-7}$&$\{v_1,u_{6k+1},v_{6k+5}\}$ & $u_{3k+3},v_{3k-1}$  \\
		\hline
		$\{u_1,v_{6k-2},v_{6k+5}\}$ & $v_{6k},v_{5}$&$\{u_1,v_{6k+1},u_{6k+5}\}$ & $v_{6k},v_{5}$  \\
		\hline
		$\{u_1,v_{6k-2},u_{6k+5}\}$ & $u_{6k-1},u_{6k+1}$& 	$\{u_1,v_{6k+1},v_{6k+5}\}$ & $v_{6k},v_{5}$  \\
		\hline
		$\{v_1,v_{6k-2},u_{6k+5}\}$ & $u_{6k+3},u_{2}$& 	$\{v_1,v_{6k+1},u_{6k+5}\}$ & $u_{6k+3},u_{2}$ \\
		\hline
		$\{u_1,u_{6k+1},u_{6k+5}\}$ & $v_{6k+3},v_{2}$&  	$\{v_1,v_{6k+1},v_{6k+5}\}$ & $u_{6k+3},u_{2}$  \\
		\hline
		$\{u_1,u_{6k+1},v_{6k+5}\}$ & $v_{6k+3},v_{2}$&$\{v_1,u_{6k+1},u_{6k+5}\}$ & $v_{6k+2},v_{6k+4}$  \\
		\hline
	\end{tabular}
\caption{From lines $2$ to $9$, $\widetilde{Y}=u_{6k+2}$; from lines $10$ to $15$, $\widetilde{Y}=u_{6k+5}$}\label{tttt1}
\end{table}

$\mathbf{Case\, 5.}$ $d^{*}(\widetilde{O},\widetilde{X})\equiv0\,(\text{mod}\,3)$ and $ d^{*}(\widetilde{O},\widetilde{Y})\equiv2\,(\text{mod}\,3)$. 
If $d^{*}(\widetilde{O},\widetilde{X})<d^{*}(\widetilde{O},\widetilde{Y})$,
then $d^{*}(\widetilde{Y},\widetilde{O})\equiv d^{*}(\widetilde{Y},\widetilde{X})\equiv0\,(\text{mod}\,3)$, so that Case 5 can be reduced to Case 1. If $d^{*}(\widetilde{O},\widetilde{X})>d^{*}(\widetilde{O},\widetilde{Y})$,
then $d^{*}(\widetilde{Y},\widetilde{X})\equiv1\,(\text{mod}\,3)$ and $ d^{*}(\widetilde{Y},\widetilde{O})\equiv0\,(\text{mod}\,3)$, therefore Case 5 can be reduced to Case 4.

$\mathbf{Case\, 6.}$ $d^{*}(\widetilde{O},\widetilde{X})\equiv1\,(\text{mod}\,3)$ and $ d^{*}(\widetilde{O},\widetilde{Y})\equiv2\,(\text{mod}\,3)$.
If $d^{*}(\widetilde{O},\widetilde{X})<d^{*}(\widetilde{O},\widetilde{Y})$, then $d^{*}(\widetilde{X},\widetilde{Y})\equiv d^{*}(\widetilde{X},\widetilde{O})\equiv1\,(\text{mod}\,3)$, so that Case 6 can be reduced to Case 3. If $d^{*}(\widetilde{O},\widetilde{X})>d^{*}(\widetilde{O},\widetilde{Y})$, then $d^{*}(\widetilde{X},\widetilde{O})\equiv1\,(\text{mod}\,3)$ and $ d^{*}(\widetilde{X},\widetilde{Y})\equiv0\,(\text{mod}\,3)$, so that Case 6 can be reduced to Case 4.

We conclude that $W$ cannot be a resolving set of $P(n,3)$, and thus the metric dimension of $P(n,3)$ has a lower bound of $4$, which completes our proof.
\end{proof}

\subsection{Case when $n=6k+2$}
In this subsection and the following section, we always assume $n=6k+2$ and $k\geq6$. 
For $n\equiv 2\,(\text{mod}\,6)$, the distance formulas for $u_i$ and $u_j$, as well as for $u_i$ and $v_j$, are the same as those when $n\equiv 3\,(\text{mod}\,6)$, but the distance formula for $v_i$ and $v_j$ is slightly different:
\begin{align*}
	&d(v_i,v_j)=
	\begin{cases}
		f(L),\quad   & \text{if}\,\, L\equiv 0\,(\text{mod}\,3), \\
		k+1,\quad   & \text{if}\,\, L=3k-1,\\
		f(L)+2,\quad   & \text{elsewise}, \\
	\end{cases}
\end{align*}
where $L=|i-j|\wedge (n-|i-j|)$.
According to the distance formula, we can find out all good vertices for $u_1$:
\begin{equation*}
	\resizebox{.95\hsize}{!}{$u_5,u_6,\ldots,u_{3i-1},u_{3i},\ldots,u_{3k-1},u_{3k},u_{3k+1},u_{3k+2},u_{3k+3},\ldots,u_{3k+3j+2},u_{3k+3j+3},\ldots,u_{6k-4},u_{6k-3},u_{6k+2}.
		$}
\end{equation*}
For the vertex good for $u_1$, we can identify all the vertices within sets $A$, $B$, or $C$ that are capable of recognizing it (see Table \ref{tableX} for details).
\begin{table}[h]
	\centering
	\begin{tabular}{|c|c|c|}
		\hline
		\textbf{Set} & \textbf{Vertices} & \textbf{Range}\\\hline
		\hline
		$A(u_{3i-1})$ & $u_2,u_5,\ldots,u_{3i+2}$&$2\leq i\leq k$ \\
		\hline
		$A(u_{3i})$ & $u_{3i-1},u_{3i+2},\ldots,u_{3k+3i-1},u_{3k+3i+5},\ldots,u_{6k+2}$&$2\leq i\leq k-1$ \\
		\hline
		$A(u_{3k})$ & $u_{3k-1},u_{3k+2},\ldots,u_{6k-1}$& \\
		\hline
		$A(u_{3k+1})$ & $u_{3k-1},u_{3k+5}$& \\
		\hline
		$A(u_{3k+2})$ & $u_{5},u_{8},\ldots,u_{3k+5}$& \\
		\hline
		$A(u_{3k+3})$ & $u_{3k+2},u_{3k+5},\ldots,u_{6k+2}$& \\
		\hline
		$A(u_{3k+3i+2})$ & $u_2,\ldots,u_{3i-1},u_{3i+5},\ldots,u_{3k+3i+5}$ &$1\leq i\leq k-2$\\
		\hline
		$A(u_{3k+3i+3})$ & $u_{3k+3i+2},u_{3k+3i+5},\ldots,u_{6k+2}$&$1\leq i\leq k-2 $\\
		\hline
		$B(u_{3i-1})$ & $u_{3i-3},u_{3i+3}$&$2\leq i\leq k+1$ \\
		\hline
		$B(u_{3i})$ & $u_{3},u_{6},\ldots,u_{3i+3}$&$2\leq i\leq k$ \\
		\hline
		$B(u_{3k+1})$ & $u_{3k},u_{3k+3},\ldots,u_{6k}$& \\
		\hline
		$B(u_{3k+3})$ & $u_{6},u_{9},\ldots,u_{3k+6}$& \\
		\hline
		$B(u_{3k+3i+2})$ & $u_{3k+3i},u_{3k+3i+6}$ &$1\leq i\leq k-2$\\
		\hline
		$B(u_{3k+3i+3})$ & $u_{3},u_{6},\ldots,u_{3i},u_{3i+6},\ldots,u_{3k+3i+6}$&$1\leq i\leq k-2 $\\
		\hline
	    $C(u_{3i-1})$ & $u_{3i-2},\ldots,u_{3k+3i-2},u_{3k+3i+4},\ldots,u_{6k+1}$&$2\leq i\leq k-1$ \\
     	\hline
	    $C(u_{3i})$ & $u_{3i-2},u_{3i+4}$&$2\leq i\leq k+1$ \\
	    \hline
	    $C(u_{3k-1})$ & $u_{3k-2},u_{3k+1},\ldots,u_{6k-2}$& \\
     	\hline
     	$C(u_{3k+1})$ & $u_{4},u_{7},\ldots,u_{3k+4}$& \\
    	\hline
    	$C(u_{3k+2})$ & $u_{3k+1},u_{3k+4},\ldots,u_{6k+1}$& \\
	    \hline
     	$C(u_{3k+3i+2})$ & $u_{3k+3i+1},u_{3k+3i+4},\ldots,u_{6k+1}$ &$1\leq i\leq k-2$\\
     	\hline
	    $C(u_{3k+3i+3})$ & $u_{3k+3i+1},u_{3k+3i+7}$&$1\leq i\leq k-2 $\\
	   \hline
	\end{tabular}
	\caption{The result when $n=6k+2$.}\label{tableX}
\end{table}

\begin{theorem}\label{22222}
	If $n=6k+2$ and $k\geq6$, then $\dim(P(n,3))\geq4$.
\end{theorem}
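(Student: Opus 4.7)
The plan is to mirror the contradiction arguments used in Theorems \ref{3}, \ref{4}, and \ref{5}, adapted to the $n=6k+2$ data in Table \ref{tableX}. Assume for contradiction that $W=\{O,X,Y\}$ is a resolving set of $P(n,3)$. First I would establish the ``distinct subscripts'' claim: if two members of $W$ share a subscript, then together they recognize at most $2k+6$ outer vertices (using that a vertex on the outer cycle recognizes more than its inner counterpart, already noted above the table), leaving at least $4k-4$ outer vertices to be recognized by the third member, which exceeds the maximal recognition capacity visible in Table \ref{tableX}. Then define $\widetilde O,\widetilde X,\widetilde Y$ as the corresponding outer-cycle vertices, so that $\widetilde W$ still recognizes every outer vertex. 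Split the problem into six cases based on $d^*(\widetilde O,\widetilde X)$ and $d^*(\widetilde O,\widetilde Y)$ modulo $3$.

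In Cases $1,2,3$ (both distances congruent to the same residue), I would exploit the smallness of the relevant sets in Table \ref{tableX}. For instance, in the residue-$0$ case, among the recognizers of $u_{3i}$ for $2\le i\le k-1$ most elements of $A$ appear in every $A(u_{3i})$ except for a narrow window near the ``antipode''; a short chain of applications of $C(u_{3j})\cap C(u_{3j+6})=\emptyset$ (directly read off from the table) forces one of $\widetilde X,\widetilde Y$ to recognize two outer vertices whose $C$-recognizer sets are disjoint, contradiction. The residue-$1$ and residue-$2$ cases proceed analogously using $A(u_{3i+1})$ (or $A(u_{3k+3i+2})$) and $B(u_{3i-1})$ respectively, again seeking two outer vertices whose recognizer sets in the relevant class are disjoint. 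The mixed Cases $4,5,6$ can be reduced, exactly as in Theorems \ref{3}--\ref{5}, by permuting the roles of $\widetilde O,\widetilde X,\widetilde Y$: whichever of the three vertices sees the other two across distances congruent to the same residue lets us fall back into one of Cases $1,2,3$; this only requires that the residues $\{d^*(\cdot,\cdot)\bmod 3\}$ of the three directed distances from one vertex sum to $0$ modulo $3$, which is automatic.

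The hard part, as in Theorem \ref{4}, will be the ``main'' mixed case (here it is Case 6, $d^*(\widetilde O,\widetilde X)\equiv 1$, $d^*(\widetilde O,\widetilde Y)\equiv 2\pmod 3$, where no reduction is available because the third directed distance is also non-trivial). There, after setting $\widetilde O=u_1$ by rotational symmetry, I would chase the same sort of chain of forced memberships: first constrain $\widetilde X\in A(u_5)\cap A(u_{3k-1})$ or the symmetric alternative $\widetilde Y$ recognizes $u_5$; then, using the rows of Table \ref{tableX} for $A(u_{3k+3i+2})$, $A(u_{3k+3i+3})$, $B(u_{3k+3i+2})$, and $B(u_{3k+3i+3})$, narrow $\widetilde Y$ down to a handful of candidates; and finally rule out the short list of remaining triples $\widetilde W$. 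The novel difficulty compared with $n=6k+3$ is that the inner-cycle distance formula contains the exceptional value $k+1$ at $L=3k-1$, so when lifting the contradiction from $\widetilde W$ back to $W$ I must separately check, for each surviving triple, the $8$ lift choices (each of $O,X,Y$ on inner or outer cycle). I expect this will produce, analogously to the tables in Theorems \ref{4} and \ref{5}, a tabulated pair of unresolvable vertices for every lift; concretely I anticipate that pairs such as $(u_{3k},u_{3k+4})$, $(v_{3k-3},v_{3k+1})$, or the inner pair $v_i,v_{i+(3k-1)}$ arising from the exceptional distance $k+1$ will do the job.

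Overall the proof is a careful but routine bookkeeping exercise once Table \ref{tableX} is in hand; the main obstacle is simply the larger number of subcases in Case $6$ (and to a lesser extent Case $1$), exactly because two extra exceptional rows ($A(u_{3k+1})$, $B(u_{3k+1})$, $C(u_{3k+1})$, and their counterparts at $3k+2,3k+3$) enlarge the candidate pools. I would therefore structure the write-up to handle these cases last, treating them by the same ``assume $\widetilde X\xrightarrow{\text{Reg}} u_5$, deduce a short list, tabulate lifts'' recipe used successfully in Theorem \ref{5}.
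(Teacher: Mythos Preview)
Your template matches the paper's exactly: the same six residue cases, the same alternating chain argument for Cases 1--3 (the paper uses $C(u_6),C(u_9),\ldots,C(u_{18})$ and the analogous $B$-chain, precisely as you outline), and reductions of two of the mixed cases to the remaining ones.

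Where you diverge is in your assessment of the irreducible mixed case $d^*(\widetilde O,\widetilde X)\equiv 1$, $d^*(\widetilde O,\widetilde Y)\equiv 2$ (the paper's Case~4). You anticipate a long list of candidate triples $\widetilde W$ followed by an $8$-fold lift to $W$ and a table of unresolvable pairs, in the style of Theorems~\ref{4} and~\ref{5}. In fact this case is the \emph{shortest} of all, and no lifting is ever needed for $n=6k+2$. The key is the exceptional two-element row $A(u_{3k+1})=\{u_{3k-1},u_{3k+5}\}$: if $\widetilde X$ recognizes $u_5$ then $\widetilde X\in\{u_2,u_5,u_8\}$, none of which lie in $A(u_{3k+1})\cup A(u_{3k-6})$, so $\widetilde Y$ must recognize both $u_{3k+1}$ and $u_{3k-6}$; but $B(u_{3k+1})\cap B(u_{3k-6})=\emptyset$, done. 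The branch where $\widetilde Y$ recognizes $u_5$ dies just as fast via $A(u_8)\cap A(u_{3k+1})=\emptyset$. The paper explicitly remarks after the proof that $n\equiv 2\pmod 6$ is the easiest residue because each vertex recognizes fewer outer vertices, so these disjointness constraints bite immediately. One further slip: your justification for the reductions (``the residues of the three directed distances \ldots\ sum to $0$ modulo $3$, which is automatic'') is false here, since the three clockwise gaps sum to $n\equiv 2\pmod 3$; indeed for the cyclic order $O,Y,X$ with residues $(1,2)$ from $O$, no relabelling lands in Cases 1--3, which is why the case must be (and is) handled directly.
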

\begin{proof}
	Suppose on the contrary that $W:=\{O,X,Y\}$ is a resolving set of $P(n,3)$.
	We claim that no two vertices of $W$ have the same subscript; since if otherwise, those two vertices with the same subscript can only recognize $2k+4$ vertices on the outer cycle, and the other vertex must recognize the remaining $4k-2$ vertices on the outer cycle, which is impossible.
	Let $\widetilde{O},\widetilde{X},\widetilde{Y}$ be defined in the same way as $\widetilde{O},\widetilde{X},\widetilde{Y}$ were in Theorem \ref{3}. We see that $\widetilde{W}:=\{\widetilde{O},\widetilde{X},\widetilde{Y}\}$ can still recognize all the vertices on the outer cycle.
	Let us discuss this problem in several cases. Apart from Cases $5,6$, we assume $\widetilde{O}=u_1$ by the rotational symmetry of $P(n,3)$.
	
	$\mathbf{Case\, 1.}$ $d^{*}(\widetilde{O},\widetilde{X})\equiv d^{*}(\widetilde{O},\widetilde{Y})\equiv0\,(\text{mod}\,3)$. It follows that $\{\widetilde{X},\widetilde{Y}\}$ can recognize $u_{3i}$ for $2\leq i\leq6$.
	Suppose that $\widetilde{X}\xrightarrow{\text{Reg}}u_{6}$.
	A proof analogous to that in Theorem \ref{3} shows $\widetilde{X}\in C(u_{6})\cap C(u_{18})$, which is a contradiction.
	
	$\mathbf{Case\, 2.}$ $d^{*}(\widetilde{O},\widetilde{X})\equiv d^{*}(\widetilde{O},\widetilde{Y})\equiv1\,(\text{mod}\,3)$. 
	Suppose that $\widetilde{X}\xrightarrow{\text{Reg}}u_{5}$, namely $\widetilde{X}\in \{u_2,u_5,u_8\}$. The relation $\widetilde{X}\notin A(u_{3k+1})$ implies that $\widetilde{Y}\xrightarrow{\text{Reg}}u_{3k+1}$, and therefore $\widetilde{Y}\in A(u_{3k+1})=\{u_{3k-1},u_{3k+5}\}$.
	We observe that neither vertex $\widetilde{X}$ nor $\widetilde{Y}$ belongs to set $A(u_{6k-3})$; consequently, $u_{6k-3}$ cannot be recognized by $\widetilde{W}$, a contradiction.
	
	$\mathbf{Case\, 3.}$ $d^{*}(\widetilde{O},\widetilde{X})\equiv d^{*}(\widetilde{O},\widetilde{Y})\equiv2\,(\text{mod}\,3)$. It follows that $\{\widetilde{X},\widetilde{Y}\}$ can recognize $u_{3i-1}$ for $2\leq i\leq6$.
	Suppose that $\widetilde{X}\xrightarrow{\text{Reg}}u_{5}$.
	A proof analogous to that in Theorem \ref{3} shows $\widetilde{X}\in B(u_{5})\cap B(u_{17})$, which is a contradiction.
	
	$\mathbf{Case\, 4.}$ $d^{*}(\widetilde{O},\widetilde{X})\equiv1\,(\text{mod}\,3)$ and $ d^{*}(\widetilde{O},\widetilde{Y})\equiv2\,(\text{mod}\,3)$. 
	Suppose first that $\widetilde{X}\xrightarrow{\text{Reg}}u_{5}$, namely $\widetilde{X}\in \{u_2,u_5,u_8\}$. Since $\widetilde{X}\notin A(u_{3k+1})\cup A(u_{3k-6})$, it follows that $\widetilde{Y}$ must recognize $u_{3k+1}$ and $u_{3k-6}$. However, $B(u_{3k+1})$ and $B(u_{3k-6})$ have no vertices in common, yielding a contradiction. 
	
	Suppose next that $\widetilde{Y}\xrightarrow{\text{Reg}}u_{5}$, namely $\widetilde{Y}\in \{u_3,u_9\}$. It follows from the relation $\widetilde{Y}\notin B(u_{8})\cup B(u_{3k+1})$ that $\widetilde{X}$ must recognize $u_{8}$ and $u_{3k+1}$. Likewise, $A(u_{8})$ and $A(u_{3k+1})$ share no common elements, leading to a contradiction.
	
	$\mathbf{Case\, 5.}$ $d^{*}(\widetilde{O},\widetilde{X})\equiv0\,(\text{mod}\,3)$ and $ d^{*}(\widetilde{O},\widetilde{Y})\equiv1\,(\text{mod}\,3)$. 
	If $d^{*}(\widetilde{O},\widetilde{X})<d^{*}(\widetilde{O},\widetilde{Y})$, then $d^{*}(\widetilde{X},\widetilde{Y})\equiv1\,(\text{mod}\,3)$ and 
	$d^{*}(\widetilde{X},\widetilde{O})\equiv2\,(\text{mod}\,3)$,
	so that Case 5 can be reduced to Case 4. 
	If $d^{*}(\widetilde{O},\widetilde{X})>d^{*}(\widetilde{O},\widetilde{Y})$, then $d^{*}(\widetilde{Y},\widetilde{O})\equiv1 \,(\text{mod}\,3)$ and $d^{*}(\widetilde{Y},\widetilde{X})\equiv2 \,(\text{mod}\,3)$, so that Case 5 can also be reduced to Case 4.
	
	$\mathbf{Case\, 6.}$ $d^{*}(\widetilde{O},\widetilde{X})\equiv0\,(\text{mod}\,3)$ and $ d^{*}(\widetilde{O},\widetilde{Y})\equiv2\,(\text{mod}\,3)$. 
	If $d^{*}(\widetilde{O},\widetilde{X})<d^{*}(\widetilde{O},\widetilde{Y})$,
	then $d^{*}(\widetilde{Y},\widetilde{O})\equiv d^{*}(\widetilde{Y},\widetilde{X})\equiv0\,(\text{mod}\,3)$, so that Case 6 can be reduced to Case 1. 
	If $d^{*}(\widetilde{O},\widetilde{X})>d^{*}(\widetilde{O},\widetilde{Y})$, then $d^{*}(\widetilde{X},\widetilde{Y})\equiv1 \,(\text{mod}\,3)$ and $d^{*}(\widetilde{X},\widetilde{O})\equiv2 \,(\text{mod}\,3)$, so that Case 6 can be reduced to Case 4.
	
	We conclude that $W$ cannot be a resolving set of $P(n,3)$, and thus the metric dimension of $P(n,3)$ has a lower bound of $4$. The proof is complete.
\end{proof}

Roughly speaking, when $n\equiv2 \,\,(\text{mod}\,\, 6)$, each $u_i$ recognizes fewer vertices than in other circumstances. Therefore, in order to recognize all vertices on the outer cycle, a resolving set requires ``more vertices'', which explains why proving the lower bound of the metric dimension to be $4$ when $n\equiv 2\,(\text{mod}\,6)$ is simpler than in other cases. 

\section{Upper bounds for the metric dimension}
In this section, we shall prove that, when $n\equiv2 \,\,(\text{mod}\,\, 6)$, 4 is also an upper bound for $\dim(P(n,3))$. It suffices to show that $W:=\{u_1,u_{3k-2},v_{3k-1},v_{6k+2}\}$ is a resolving set of $P(n,3)$.    
Using the distance formula, we can obtain the metric representations of all vertices with respect to $W$, as shown below.
\begin{align*}
	&r(u_{3i}|W)=
	\begin{cases}
		(2,k+1,k+1,2),\quad   & \text{if}\,\, i=1, \\
		(i+3,k-i+2,k-i+2,i+1),\quad   & \text{if}\,\, 2\leq i\leq k-2,\\
		(k+2,1,3,k),\quad   & \text{if}\,\, i=k-1,\\
		(k+3,2,2,k+1),\quad   & \text{if}\,\, i=k,\\
		(2k-i+3,i-k+4,i-k+2,2k-i+3),\quad   & \text{if}\,\, k+1\leq i\leq 2k-1, \\
		(3,k+2,k+2,3),\quad   & \text{if}\,\, i=2k. \\
	\end{cases}
\end{align*}
\begin{align*}
	&r(u_{3i+1}|W)=
	\begin{cases}
		(0,k+1,k+1,2),\quad   & \text{if}\,\, i=0, \\
		(i+2,k-i+1,k-i+1,i+2),\quad   & \text{if}\,\, 1\leq i\leq k-2,\\
		(k+1,0,2,k+1),\quad   & \text{if}\,\, i=k-1,\\
		(k+2,3,3,k+2),\quad   & \text{if}\,\, i=k,\\
		(2k-i+4,i-k+3,i-k+3,2k-i+2),\quad   & \text{if}\,\, k+1\leq i\leq 2k-1, \\
		(2,k+3,k+1,2),\quad   & \text{if}\,\, i=2k. \\
	\end{cases}
\end{align*}
\begin{align*}
	&r(u_{3i+2}|W)=
	\begin{cases}
		(1,k+2,k,3),\quad   & \text{if}\,\, i=0, \\
		(i+3,k-i+2,k-i,i+3),\quad   & \text{if}\,\, 1\leq i\leq k-3,\\
		(k+1,2,2,k+1),\quad   & \text{if}\,\, i=k-2,\\
		(k+2,1,1,k+2),\quad   & \text{if}\,\, i=k-1,\\
		(k+3,4,2,k+1),\quad   & \text{if}\,\, i=k,\\
		(2k-i+3,i-k+4,i-k+2,2k-i+1),\quad   & \text{if}\,\, k+1\leq i\leq 2k-1, \\
		(1,k+2,k+2,1),\quad   & \text{if}\,\, i=2k. \\
	\end{cases}
\end{align*}
\begin{align*}
	&r(v_{3i}|W)=
	\begin{cases}
		(i+2,k-i+1,k-i+3,i),\quad   & \text{if}\,\, 1\leq i\leq k-1,\\
		(k+2,3,3,k),\quad   & \text{if}\,\, i=k,\\
		(k+1,4,4,k+1),\quad   & \text{if}\,\, i=k+1,\\
		(2k-i+2,i-k+3,i-k+3,2k-i+4),\quad   & \text{if}\,\, k+2\leq i\leq 2k-1, \\
		(2,k+1,k+3,4),\quad   & \text{if}\,\, i=2k. \\
	\end{cases}
\end{align*}
\begin{align*}
	&r(v_{3i+1}|W)=
	\begin{cases}
		(i+1,k-i,k-i+2,i+3),\quad   & \text{if}\,\, 0\leq i\leq k-1,\\
		(k+1,2,4,k+3),\quad   & \text{if}\,\, i=k,\\
		(2k-i+3,i-k+2,i-k+4,2k-i+3),\quad   & \text{if}\,\, k+1\leq i\leq 2k-2, \\
		(4,k+1,k+1,4),\quad   & \text{if}\,\, i=2k-1,\\
		(3,k+2,k,3),\quad   & \text{if}\,\, i=2k. \\
	\end{cases}
\end{align*}
\begin{align*}
	&r(v_{3i+2}|W)=
	\begin{cases}
		(i+2,k-i+1,k-i-1,i+4),\quad   & \text{if}\,\, 0\leq i\leq k-2,\\
		(k+1,2,0,k+1),\quad   & \text{if}\,\, i=k-1,\\
		(k+2,3,1,k),\quad   & \text{if}\,\, i=k,\\
		(2k-i+2,i-k+3,i-k+1,2k-i),\quad   & \text{if}\,\, k+1\leq i\leq 2k-1, \\
		(2,k+1,k+1,0),\quad   & \text{if}\,\, i=2k. \\
	\end{cases}
\end{align*}

Note that in the metric representations above, the first and fourth coordinates, as well as the second and third coordinates, always differ by $2$. For vertices sharing the same metric representation, the magnitude relationship between the first and fourth coordinates, and between the second and third coordinates, is necessarily identical.
Vertices in $P(n,3)$ can therefore be categorized based on these relative magnitudes. By comparing metric representations within each category, the distinctness of all these metric representations can be easily verified, and thus 
$\{u_1,u_{3k-2},v_{3k-1},v_{6k+2}\}$ is a resolving set of $P(n,3)$. 

\section{Conclusion}
We have proved that, when $n\equiv2,3,4,5 \,\,(\text{mod}\,\, 6)$ and is sufficiently large, the metric dimension of generalized Petersen graphs $P(n,3)$ has a lower bound of $4$. Using the conclusions from \cite{Imran1} and this work, we can obtain the exact value of $\dim(P(n,3))$ when $n\equiv2,3,4,5\,\,(\text{mod}\,\, 6)$.

\noindent\textbf{Acknowledgements.} This research is supported by the National Natural Science Foundation of China (Nos. 12301606, 72301156).

\noindent\textbf{Statements and Declarations:} We declare that we do not have any commercial or associative interest that represents a conflict of 
interest in connection with the work submitted.

\begin{quote}
{\bf References and Notes}
\begin{enumerate}
\bibitem{Slater}
Slater, P.: Leaves of trees. Proc. 6th Southeastern Conf. On Combinatorics, Graph Theory and Computing. \textbf{14}, 549-559(1975).	
\bibitem{Harary}
Harary, F., Melter, R.: On the metric dimension of a graph. Ars Combin. \textbf{2}, 191-195(1976).	
\bibitem{Khuller}
Khuller, S., Raghavachari, B., Rosenfeld, A.: Landmarks in graphs. Discrete
Appl. Math. \textbf{70}, 217-229(1996).
\bibitem{Buczkowski}
Buczkowski, P.S., Chartrand, G., Poisson, C., Zhang, P.: On k-dimensional graphs and their bases. Periodica Math. Hung. \textbf{46}, 9-15(2003).
\bibitem{Hernando}
Hernando, C., Mora, M., Pelayo, I.M., Seara, C., Caceres, J., Puertas, M.L.: On the metric dimension of some families of graphs. Electronic Notes in Disc. Math. \textbf{22}, 129-133(2005).
\bibitem{Tomescu}
Tomescu, I., Javaid, I.: On the metric dimension of the Jahangir graph. Bull. Math. Soc. Sci. Math. Roumanie Tome. \textbf{50}, 371-376(2007).
\bibitem{Sedlar}
Sedlar, J., Skrekovski, R.: Bounds on metric dimensions of graphs with edge disjoint cycles. Appl. Math. Comput.. \textbf{396}, 125908(2021).
\bibitem{Vetrik1}
Vetr\'{i}k, T.: The metric dimension of circulant graphs. Canad. Math. Bull. \textbf{60}, 206–216(2017).
\bibitem{Gao}
Gao, R., Xiao, Y., Zhang, Z.: On the metric dimension of circulant graphs. Canad. Math. Bull. \textbf{67}, 328–337(2024).
\bibitem{Vetrik2}
Vetr\'{i}k, T., Imran, M., Knor, M., \v{S}krekovski, R.: The metric dimension of the circulant graph with $2t$ generators can be less than $t$. J. King Saud. Univ. Sci. \textbf{35}, 102834(2023).
\bibitem{Tapendra}
Tapendra, BC., Dueck, S.: The metric dimension of circulant graphs. Opuscula Math. \textbf{45}, 39-51(2025).
\bibitem{Bollobas}
Bollob\'{a}s, B., Mitsche, D., Pralat, P.: Metric dimension for random graphs. Electron. J. Combin. \textbf{20}, art. 1(2013).
\bibitem{Mitsche}
Mitsche, D., Ru\'{e}, J., Pralat, P.: On the limiting distribution of the metric dimension for random forests. European J. Combin. \textbf{49}, 68-89(2015).
\bibitem{Tillquist}
Tillquist, R.C., Frongillo, R.M., Lladser, M.E.: Getting the lay of the land in discrete space: A survey of metric dimension and its applications. SIAM Review. \textbf{65}, 919-962(2023).           	
\bibitem{Watkins}
Watkins, M.E.: A theorem on Tait coloring with an application to the generalized Petersen graphs. J.
Combin. Theory. \textbf{6}, 152–164(1969).		
\bibitem{Javaid1}
Javaid, I., Rahim, M.T., Ali, K.: Families of regular graphs with constant metric dimension. Utilitas. Math. \textbf{75}, 21-33(2008).		
\bibitem{Imran1}
Imran, M., Baig, A.G., Shafiq, M.K., and Tomecu, I.: On metric dimension of generalized Petersen graphs $P(n,3)$. Ars. Combinatoria. \textbf{117}, 113-130(2014).
\bibitem{Naz}
Naz, S., Salman, M., Ali, U., Javaid, I., and Bokhary, S.A.-u.-H.: On the constant metric dimension of generalized Petersen graphs $P(n,4)$. Acta Mathematica Sinica, English Series. \textbf{30}, 1145-1160(2014).
\bibitem{Shao}
Shao, Z., Sheikholeslami, S.M., Wu, P., Liu, J.: The metric
dimension of some generalized Petersen graphs. Discrete Dyn. Nat. Soc. vol. 2018, Article ID
4531958, 10 pages(2018).
\bibitem{Javaid2}
Javaid, I., Ahmad, S., Azhar, M.N.: On the metric dimension of the generalized Petersen graphs $P(2m+1,m)$. Ars Combin. \textbf{105}, 172-182(2012).
\bibitem{Ahmad2}
Ahmad, S., Chaudhry, M.A., Javaid, I., Salman, M.: On the metric dimension of the generalized Petersen graphs. Quaestiones Mathematicae. \textbf{36}, 421-435(2013).
\bibitem{Imran2}
Imran, M., Siddiqui, M.K., Naeem, R.: On the Metric Dimension of Generalized Petersen Multigraphs. IEEE Access. \textbf{6}, 74328–74338(2018).
\end{enumerate}
\end{quote}
\end{document}